\newcommand{\Cov}{\textrm{Cov}}
\newcommand{\bE}{\mathrm{E}}
\newcommand{\Covth}{\textrm{\em Cov}}
\newcommand{\R}{\mathbb{R}}
\newcommand{\bN}{\mathbb{N}}
\newcommand{\jed}{\mathbf{1}}
\newcommand{\ID}[4]{i\langle#1,#2\rangle-\frac{1}{2}%
  \langle#3#1,#1\rangle+\int_{\mathbb{R}^d}%
  \bigl(e^{i\langle#1,u\rangle}-1-i\langle#1,u\rangle\cdot%
  \jed_{\{\|u\|\leq#4\}}(u)\bigr)\nu(du)}
\theoremstyle{definition}
\newtheorem{defn}{Definition}[section]
\newtheorem{thm}[defn]{Theorem}
\newtheorem{prop}[defn]{Proposition}
\newtheorem{rem}[defn]{Remark}
\newtheorem{cor}[defn]{Corollary}
\begin{document}
\begin{center}
{\Large\textbf{Processes with block-associated increments}}\\[3mm]
\textbf{Adam Jakubowski, Joanna Kar\l owska-Pik}\\
Faculty of Mathematics and Computer Science\\
Nicolaus Copernicus University\\
Toruń, Poland
\end{center}

\begin{abstract}
This paper is motivated by relations between association and
independence of random variables. It is well-known that for real
random variables independence implies association in the sense of
Esary, Proschan and Walkup, while for random vectors this simple
relationship breaks. We modify the notion of association in such a
way that any vector-valued process with independent increments has
also associated increments in the new sense --- association
between blocks.

The new notion is quite natural and admits nice characterization
for some classes of processes. In particular, using the covariance
interpolation formula due to Houdr\'{e}, P\'{e}rez-Abreu and
Surgailis, we show that within the class of multidimensional
Gaussian processes block-association of increments is equivalent
to supermodularity (in time) of the covariance functions.

We define also corresponding versions of weak association,
positive association and negative association. It turns out that
the Central Limit Theorem for weakly associated random vectors due
to Burton, Dabrowski and Dehling remains valid, if the weak
association is relaxed to the weak association between blocks.

\end{abstract}

\section{Introduction}
Random variables $X_1, X_2, \ldots, X_n$ are  {\em associated} if
\begin{equation}\label{eq1}
\Cov (f(X_1,X_2,\ldots,X_n), g(X_1,X_2,\ldots,X_n)) \geq 0,
\end{equation}
for each pair of functions $f, g : \R^n \to \R^1$, which are
non-decreasing in each coordinate and for which the above
covariance exists. This definition, due to Esary, Proschan and
Walkup \cite{Esary},  seems to be the most appropriate description
of positive dependence phenomena encountered in various areas,
e.g. reliability theory \cite{BP}, \cite{MuellerStoyan},
statistical physics \cite{Newman1}, \cite{Newman2},
\cite{Liggett}, multivariate extremes \cite{Resnick} or random
sets \cite{KPS}, to mention but a~few. We refer to the recent
monograph \cite{BS} for properties of association, an extensive
list of references and more abstract formalism of associated
random elements.

Our paper is motivated by relations between association and
independence of random variables. It is well-known \cite[Theorem
1.8]{BS}, that any family of independent random variables is
associated. In particular, any stochastic process
$X=\{X_t\}_{t\geq0}$ {\em with independent increments} has also
{\em associated increments} in the sense of Glasserman
\cite{Glasserman}. The last statement means that for any choice of
sampling points $0 < t_1<t_2<\ldots<t_n$ the differences
$$\Delta_1X=X_{t_1} - X_0,\quad
\Delta_2X=X_{t_2}-X_{t_1},\quad\ldots,\quad
\Delta_nX=X_{t_n}-X_{t_{n-1}}$$ are associated random variables.

This simple and natural relationship breaks when we pass to
processes with values in $\R^d$, $d \geq 2$. Consider, for
example, a real process $\{Z_t\}_{t\geq 0}$ with independent
increments and non-degenerate marginal laws and set
\[ Y_t = \left[\begin{array}{c}
Z_t \\
 - Z_t
 \end{array}\right].
 \]
Then $\{Y_t\}_{t\geq 0}$ retains independence of increments, but
clearly the components of each increment $\Delta_kY$ (e.g. $X_1 =
Z_{t_1} - Z_0$, $X_2 = -(Z_{t_1} - Z_0)$) do not satisfy
(\ref{eq1}), hence the random vectors  $\Delta_1Y,\Delta_2Y,
\ldots, \Delta_nY$ cannot be associated.

We aim at modifying the notion of association in such a way that
\begin{itemize}
\item for random variables ($d=1$) the new notion is equivalent to association;
\item any vector-valued process with independent increments has also
associated increments in the new sense.
\end{itemize}
This is done in Section 2, where we introduce {\em association
between blocks} of random variables. The idea consists in
requiring association between {\em real} non-decreasing (in each
coordinate) functions of blocks. It turns out that the modified
notion of association can be easily characterized within classes
of random vectors with multivariate normal or infinitely divisible
distributions (like the usual association). Similarly, when
applied to increments of stochastic processes, the new notion
admits nice characterizations within particular classes of
processes. For example, for multidimensional Gaussian processes
the block-association of increments is equivalent to
$L$-superadditivity (or supermodularity) of all covariance
functions (see Theorem \ref{thgaus}, Section 3). This example
shows that association between blocks deals with core properties
of multidimensional stochastic processes.

In a similar spirit, in Section 4 we weaken the notion of {\em
weak association} introduced by Burton, Dabrowski and Dehling
\cite{BDD}, {\em positive association} (as defined in Bulinski and
Shashkin \cite{BS}) and {\em negative association} (due to
Joag-Dev and Proschan \cite{JDP}). It is interesting that obtained
this way ``weak association between blocks" and ``positive
association between blocks" coincide while their prototypes
differ.

The weak association of random vectors  is formally stronger than
the weak association between blocks built upon coordinates of
vectors. We do not know any example showing that the equality of
both classes actually does not hold. On the other hand an
inspection of methods based on factorization of increasing
functions and used in the proof of Theorems \ref{thgau} and
\ref{thid}  suggests that verifying whether a sequence of random
vectors is ``weakly associated between blocks" may be essentially
easier then the corresponding procedure for ``weak association".
Therefore in Section~5 we restate a complete multidimensional
generalization of Newman's Central Limit Theorem \cite{Newman1}
and Newman-Wright's Invariance Principle \cite{NewmanWright} for
sums of  stationary associated random variables, originally proved
by Burton, Dabrowski and Dehling \cite{BDD} for weakly associated
random vectors. The point is that this result is valid under weak
association between blocks, without any change in its proof.

\section{Association between blocks}
In what follows when referring to vectors we mean {\em column}
vectors.

Let us consider a family $X=\{X_i, i\in I\}$ of real-valued random
variables indexed by a finite set $I$. Suppose that
$I=\bigcup_{k=1}^n I_k$, where the sets $I_k$ are non-empty and
pairwise disjoint. The sets $I_1,\ldots, I_n$ form {\em the
blocks' basis} $\mathcal{J}$. Equip each set $I_k$ with some
arbitrary (but fixed) linear order. Write $X(I_k)$ for vector with
components $\{X_i, i\in I_k\}$. Let $|I|$ denote the cardinality
of $I$.

We are ready to formulate our basic definition.

\begin{defn}
A family $X=\{X_i, i\in I\}$ is called \emph{associated between
blocks} if for all non-decreasing functions
$f_k:\R^{|I_k|}\rightarrow\R$, $k=1,2,\ldots,n$, the random vector
$$(f_1(X(I_1)),f_2(X(I_2)),\ldots,f_n(X(I_n)))$$
is associated, i.e. for all non-decreasing functions
$g,h:\R^n\rightarrow\R$
\begin{equation}\label{eq3}
\Cov (g(f_1(X(I_1)),\ldots,f_n(X(I_n))),
h(f_1(X(I_1)),\ldots,f_n(X(I_n))))\geq0
\end{equation}
if the above covariance is well defined. \label{def_blocks}
\end{defn}

The very definition and basic properties of association imply the
following facts.

\begin{prop} Let $X=\{X_i, i\in I\}$ be an
associated family of random variables. Then for arbitrary
partition $I=\bigcup_{k=1}^n I_k$ we have association of $X$
between blocks based on $I_1,\ldots,I_n$.
\end{prop}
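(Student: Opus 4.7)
The plan is to reduce the claim to a direct application of the defining property (\ref{eq1}) of association by showing that the composite functions appearing in (\ref{eq3}) are themselves coordinatewise non-decreasing functions of the full family $\{X_i, i \in I\}$.

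First, I would fix non-decreasing functions $f_k : \R^{|I_k|} \to \R$ for $k=1,\ldots,n$ and non-decreasing $g, h : \R^n \to \R$, and define
\[
\tilde g(x_i, i \in I) = g\bigl(f_1(x(I_1)), \ldots, f_n(x(I_n))\bigr), \qquad
\tilde h(x_i, i \in I) = h\bigl(f_1(x(I_1)), \ldots, f_n(x(I_n))\bigr).
\]
The key observation is that $\tilde g, \tilde h : \R^{|I|} \to \R$ are non-decreasing in each coordinate. Indeed, increasing a single coordinate $x_{i_0}$ with $i_0 \in I_k$ leaves $x(I_j)$ unchanged for $j \neq k$ and weakly increases the vector $x(I_k)$ coordinatewise, so $f_k(x(I_k))$ does not decrease while the other $f_j(x(I_j))$ remain fixed. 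The argument of $g$ (and of $h$) is therefore weakly larger in the coordinatewise order of $\R^n$, and since $g$ and $h$ are non-decreasing in each coordinate, $\tilde g$ and $\tilde h$ do not decrease either.

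Then I would invoke the hypothesis: since $X=\{X_i, i\in I\}$ is associated and $\tilde g, \tilde h$ are non-decreasing in each coordinate, (\ref{eq1}) applied to $\tilde g$ and $\tilde h$ yields
\[
\Cov\bigl(\tilde g(X),\tilde h(X)\bigr) \geq 0,
\]
which is exactly inequality (\ref{eq3}). A brief remark on well-definedness of the covariance is in order: whenever the covariance in (\ref{eq3}) exists, the covariance of $\tilde g(X)$ and $\tilde h(X)$ is the same object, so the assumption used in the definition of association applies without modification.

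There is no real obstacle here; the proof is essentially a one-line composition argument. The only point worth stressing is the monotonicity check, which relies on the fact that the blocks $I_1,\ldots,I_n$ are pairwise disjoint, so that altering a coordinate $x_{i_0}$ affects exactly one of the vectors $x(I_k)$.
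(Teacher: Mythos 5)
Your proof is correct and is precisely the composition argument the paper has in mind when it states that ``the very definition and basic properties of association imply'' this proposition (the paper gives no further details). The monotonicity check for $\tilde g,\tilde h$, resting on the disjointness of the blocks, is exactly the one relevant point, and you have it right.
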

\begin{prop}
If vectors $X(I_k)$, $k=1,2,\ldots,n$, are independent then $X$ is
associated between blocks.
\end{prop}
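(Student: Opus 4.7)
The plan is to reduce the statement to the classical fact (cited as \cite[Theorem 1.8]{BS} in the introduction) that any finite family of independent real-valued random variables is associated. The key point is that Definition~\ref{def_blocks} only requires association of the \emph{real-valued} random variables $f_1(X(I_1)),\ldots,f_n(X(I_n))$, so the vector-valued obstruction illustrated by the $Y_t=(Z_t,-Z_t)^{T}$ example never enters.

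First I would fix arbitrary non-decreasing functions $f_k:\R^{|I_k|}\to\R$, $k=1,\ldots,n$, for which the covariances in \eqref{eq3} are well defined. Since the vectors $X(I_1),\ldots,X(I_n)$ are independent and each $f_k$ is a Borel-measurable function of $X(I_k)$ only, the real-valued random variables
\[
Y_1:=f_1(X(I_1)),\ Y_2:=f_2(X(I_2)),\ \ldots,\ Y_n:=f_n(X(I_n))
\]
are independent. Monotonicity of the $f_k$'s is not used at this stage; only measurability matters for preservation of independence.

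Next, I would invoke the classical Esary--Proschan--Walkup result (as stated in \cite[Theorem~1.8]{BS}) that independent real-valued random variables are associated. This yields that $(Y_1,\ldots,Y_n)$ is an associated random vector, i.e.\ for every pair of coordinate-wise non-decreasing $g,h:\R^n\to\R$ with $\Cov(g(Y_1,\ldots,Y_n),h(Y_1,\ldots,Y_n))$ well defined, this covariance is non-negative. This is exactly inequality \eqref{eq3}.

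Since the $f_k$'s were arbitrary, Definition~\ref{def_blocks} is satisfied, concluding the proof. There is no substantive obstacle here: the proposition is almost a tautological consequence of the definition together with the one-dimensional classical theorem, and its main role in the paper is presumably to record that block-association does accommodate the vector-valued independent-increments examples that motivated its introduction.
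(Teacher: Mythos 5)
Your argument is correct and is exactly the route the paper intends: the paper states this proposition without proof, remarking only that it follows from ``the very definition and basic properties of association,'' and your reduction --- independence of the $Y_k=f_k(X(I_k))$ plus the Esary--Proschan--Walkup theorem that independent real random variables are associated --- is precisely that immediate consequence.
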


\begin{prop}
For a fixed blocks' basis $\mathcal{J} = \{I_1, I_2, \ldots,
I_n\}$, the family ${\mathcal{P}}^+_{\mathcal{J}}$ of laws of
random vectors which are associated between blocks based on $I_1,
I_2, \ldots, I_n$ is closed with respect to the topology of weak
convergence.
\end{prop}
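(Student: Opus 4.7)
The plan is to mimic the classical argument that the class of laws of associated random vectors is closed under weak convergence, adapted to the block setting. Let $X^{(m)} \Rightarrow X$ weakly, with each $X^{(m)}$ associated between blocks based on $\mathcal{J}$; I must verify (\ref{eq3}) for the limit $X$.

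First, I would establish the reduction that Definition \ref{def_blocks} need only be checked on \emph{bounded continuous} non-decreasing test functions $f_k, g, h$. The reduction is obtained by approximating an arbitrary non-decreasing $f : \R^m \to \R$ by bounded continuous non-decreasing functions (for instance, truncating by $\tau_j(y) = \max(-j, \min(j, y))$ and smoothing by convolution with a non-negative mollifier, both of which preserve coordinate-wise monotonicity) and then passing to the limit in the covariances by monotone/dominated convergence whenever the covariance in (\ref{eq3}) is well defined. This step is entirely analogous to the corresponding reduction for ordinary association, and the block structure causes no additional difficulty because each $f_k$ enters only through the real-valued composition $f_k(X(I_k))$.

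Second, fix bounded continuous non-decreasing $f_k$, $k=1,\ldots,n$, and bounded continuous non-decreasing $g, h : \R^n \to \R$, and set
$$ G(x) = g\bigl(f_1(x(I_1)), \ldots, f_n(x(I_n))\bigr), \qquad H(x) = h\bigl(f_1(x(I_1)), \ldots, f_n(x(I_n))\bigr), $$
for $x \in \R^{|I|}$. Then $G$, $H$, and their product $GH$ are all bounded and continuous on $\R^{|I|}$, so by the Portmanteau theorem
$$ \Cov\bigl(G(X^{(m)}), H(X^{(m)})\bigr) \longrightarrow \Cov\bigl(G(X), H(X)\bigr) \qquad \text{as } m \to \infty. $$
Every covariance on the left-hand side is non-negative by the block-association of $X^{(m)}$, hence the limit is non-negative. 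Combined with the first step this proves (\ref{eq3}) for $X$.

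The main obstacle is the reduction step: one must verify that the truncation-and-mollification scheme preserves coordinate-wise monotonicity, and that the integrability needed to pass from bounded to unbounded $f_k, g, h$ in the covariance is available whenever the target covariance is assumed to be well defined. Once these standard but slightly delicate manipulations are in place, the weak-convergence step is a one-line application of the Portmanteau theorem to bounded continuous functions of~$X^{(m)}$.
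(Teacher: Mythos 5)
The paper offers no proof of this proposition (it is listed among the facts that "the very definition and basic properties of association imply"), the intended basic property being the classical closedness of the class of associated laws under weak convergence. Your second step is exactly right and is the whole content of the weak-convergence part: for bounded continuous non-decreasing $f_k,g,h$ the functions $G$, $H$ and $GH$ are bounded and continuous, so the covariances converge and non-negativity passes to the limit. Equivalently, for \emph{continuous} $f_k$ one can apply the continuous mapping theorem to get $F(X^{(m)})\Rightarrow F(X)$ and then quote the known closedness of association for the $n$-dimensional vectors $F(X^{(m)})$.

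The genuine soft spot is your reduction step, and specifically the inner functions $f_k$. Truncation is harmless, and for the \emph{outer} functions $g,h$ the reduction to bounded continuous (indeed $C^1_b$ or Lipschitz) test functions is exactly \cite[Theorem 1.5]{BS} applied to the law of the $n$-dimensional vector $\bigl(f_1(X(I_1)),\ldots,f_n(X(I_n))\bigr)$, so it should be cited rather than re-derived. But for the inner functions your mollification argument does not go through as stated: if $f$ is coordinate-wise non-decreasing and $\phi_\varepsilon$ is a mollifier, then $f*\phi_\varepsilon(x)$ is squeezed between the lower and upper one-sided limits of $f$ at $x$, so it converges to $f(x)$ only at continuity points of $f$; since the law of $X(I_k)$ may assign positive mass to the discontinuity set of $f_k$ (take $f_k=\jed_{[0,\infty)}$ and a law with an atom at $0$), you do not get $f_k^{(\varepsilon)}(X(I_k))\to f_k(X(I_k))$ in probability, and neither monotone nor dominated convergence rescues the passage to the limit in the covariance. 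This is precisely the point that makes the classical equivalence "association for all non-decreasing functions $\Leftrightarrow$ association for bounded continuous non-decreasing functions" a theorem rather than an exercise (cf.\ \cite[Theorem 2.1]{Esary}); it is handled there by working with indicators of upper sets and a careful boundary argument, not by naive smoothing. You should either quote such a reduction for the block-factorized class of test functions or supply that boundary argument; as written, the step "pass to the limit in the covariances by monotone/dominated convergence" would fail.
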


Let $X = \{X_i\}_{i\in I}$ be an $|I|$-dimensional Gaussian random
vector. It is well known \cite{Pitt} --- but by no means trivial
--- that the non-negativity of all entries of the covariance
matrix $\Sigma$ of $X$ is necessary and sufficient for association
of $X$. We have a very similar situation for the association
between blocks.

\begin{thm}\label{thgau}
A Gaussian random vector $X = \{X_i\}_{i\in I}$ is associated
between blocks built on $I_1, I_2, \ldots, I_n$ if and only if
$\sigma_{kl} = \Covth(X_k, X_l) \geq 0$ for all $k, l$ which are
not in the same block.
\end{thm}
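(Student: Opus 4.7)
\textbf{Necessity} is immediate: for $k\in I_r$ and $l\in I_s$ with $r\ne s$, take each $f_j$ to be a coordinate projection of its block (with $f_r(x)=x_k$ and $f_s(x)=x_l$), and take $g,h:\R^n\to\R$ to be the projections onto the $r$-th and $s$-th coordinates. Inequality (\ref{eq3}) then reduces directly to $\Cov(X_k,X_l)\ge 0$.

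\textbf{Sufficiency.} A standard mollification argument (combined with the weak closedness stated in the preceding proposition) reduces to smooth non-decreasing $f_k$, $g$, $h$. Set $F(x)=(f_1(x(I_1)),\ldots,f_n(x(I_n)))$, $\Phi=g\circ F$, $\Psi=h\circ F$; both are coordinatewise non-decreasing in $X$, as the chain rule gives $\partial_i\Phi(X)=(\partial_kg)(F(X))\cdot\partial_if_k(X(I_k))\ge 0$ for $i\in I_k$. The engine is the Gaussian covariance interpolation formula of Houdr\'e, P\'erez-Abreu and Surgailis: with $X'$ an independent copy of $X$ and $X_\theta=\cos\theta\,X+\sin\theta\,X'$,
\begin{equation*}
\Cov(\Phi(X),\Psi(X))=\int_0^{\pi/2}\sin\theta\,\bE\bigl[\nabla\Phi(X)^{T}\,\Sigma\,\nabla\Psi(X_\theta)\bigr]\,d\theta.
\end{equation*}
Decompose $\Sigma=\Sigma_D+\Sigma_C$ into its block-diagonal and cross-block parts; by hypothesis all entries of $\Sigma_C$ are non-negative. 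The $\Sigma_C$-contribution is then non-negative, being a positive-weighted sum of $\sigma_{ij}\bE[\partial_i\Phi(X)\cdot\partial_j\Psi(X_\theta)]$ over $i\not\sim j$ (indices from distinct blocks), each factor non-negative.

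The main obstacle is the $\Sigma_D$-contribution, which splits block by block into integrals of
\[
(\partial_kg)(F(X))(\partial_kh)(F(X_\theta))\cdot\nabla f_k(X(I_k))^{T}\Sigma_{kk}\nabla f_k(X_\theta(I_k)),
\]
where the quadratic form evaluated at two different Gaussian arguments is not a priori signed. I expect to resolve this by conditioning on $\mathcal F_k=\sigma\bigl((X(I_l),X'(I_l))_{l\ne k}\bigr)$: conditionally, the outer factors $(\partial_kg)(F(X))$ and $(\partial_kh)(F(X_\theta))$ become non-negative scalar functions of $f_k(X(I_k))$ and $f_k(X_\theta(I_k))$ alone, and, absorbed into suitable antiderivatives, they produce monotone scalar maps $G_k\circ f_k$ and $H_k\circ f_k$ of $X(I_k)$. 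The residual expression is then---up to a Schur-complement correction whose structure is itself cross-block in nature and so can be handled as in the $\Sigma_C$-step---the HPS integrand for the conditional covariance $\Cov(G_k(f_k(X(I_k))),H_k(f_k(X(I_k)))\mid\mathcal F_k)$, i.e.\ a covariance of two non-decreasing functions of the single real-valued random variable $f_k(X(I_k))$, and hence non-negative. Averaging over $\mathcal F_k$ completes the argument.
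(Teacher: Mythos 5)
Your overall route is the paper's: Theorem \ref{thgau} is obtained there as the special case $\nu=0$ of Theorem \ref{thid}, whose proof uses the Houdr\'e--P\'erez-Abreu--Surgailis interpolation identity (Proposition \ref{FormHPS}), expands $\langle\Sigma\nabla(g\circ F)(y),\nabla(h\circ F)(z)\rangle$ by the chain rule, and splits it into a within-block sum (\ref{suma1}) and a cross-block sum (\ref{suma2}). Your necessity argument and your treatment of the cross-block part $\Sigma_C$ coincide with the paper's. The divergence is in the within-block part, and that is where your proposal has a genuine gap: the conditioning plan is not carried out, and as sketched it does not close. Conditionally on $\mathcal F_k$, the pair $(X(I_k),X_\theta(I_k))$ is Gaussian of interpolation type, but with covariance equal to the Schur complement of $\Sigma$ over the complementary coordinates, not $\Sigma_{kk}$, and with two \emph{different} $\mathcal F_k$-measurable conditional means; so the conditional expectation of your integrand is not the HPS integrand of any conditional covariance of functions of the single variable $f_k(X(I_k))$. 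The discrepancy $\Sigma_{kk}-S_k=\Sigma_{k\bullet}\Sigma_{\bullet\bullet}^{-1}\Sigma_{\bullet k}$ (with $\bullet$ denoting the indices outside $I_k$) enters with a sign and involves $\Sigma_{\bullet\bullet}^{-1}$, whose entries carry no sign information, so it cannot ``be handled as in the $\Sigma_C$-step.'' As written, the hardest step of the theorem remains unproved in your text.

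That said, you have correctly located the crux, and your suspicion about it is well founded. The paper settles (\ref{suma1}) in one line by asserting that
\begin{equation*}
\sum_{k\in I_i}\sum_{l\in I_i}\sigma_{kl}\frac{\partial f_i}{\partial x_k}(y)\frac{\partial f_i}{\partial x_l}(z)\geq0
\end{equation*}
because $(\sigma_{kl})_{k,l\in I_i}$ is a covariance matrix. Positive semidefiniteness gives $a^{T}\Sigma_{ii}a\geq0$, but for two distinct non-negative vectors $a=\nabla f_i(y)$, $b=\nabla f_i(z)$ the bilinear form $a^{T}\Sigma_{ii}b$ can be negative: take $\Sigma_{ii}=\bigl(\begin{smallmatrix}1&-1\\-1&1\end{smallmatrix}\bigr)$, $a=(1,0)^{T}$, $b=(0,1)^{T}$, which is realizable by $f_i(x_1,x_2)=\phi(x_1)+\psi(x_2)$ with suitable non-decreasing $C^1_b$ functions $\phi,\psi$. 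So the within-block term is indeed ``not a priori signed'' pointwise, and any correct argument must use more than positive semidefiniteness --- for instance the fact that the same $f_i$ appears in both gradients and that an expectation over the coupling $(Y^{\alpha},Z^{\alpha})$ and an integral over $\alpha$ are taken. In short: same strategy as the paper, correct identification of the difficulty, but no working resolution of it.
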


While the necessity part in the above theorem is obvious, the
sufficiency does not seem to be easy unless advanced tools are
used. We propose to exploit the covariance interpolation formula
and the technique developed by Houdr\'{e}, P\'{e}rez-Abreu and
Surgailis (\cite{HPS}, Section 2), restated below in Proposition
\ref{FormHPS}. Since the covariance formula is valid for general
infinitely divisible distributions, Theorem \ref{thgau} is a
direct consequence of Theorem \ref{thid}, which will be given
after a necessary notation is introduced.

Let $X=\{X_i\}_{i\in I}$ be an $|I|$-dimensional infinitely
divisible random vector with the L\'evy-Khinchin triplet
$(a,\Sigma,\nu)$ (we write then $X\sim\mathcal{ID}(a,\Sigma,\nu)$)
and the characteristic function
$\varphi(t)=\varphi(t;a,\Sigma,\nu)$ given by
\begin{equation}\label{ajeq1}
\ln \varphi(t) =\ID{t}{a}{\Sigma}{1}.
\end{equation}
Recall that $a \in \R^{|I|}$ is a vector,
$\Sigma=(\sigma_{kl})_{k,l\in I} \in \R^{|I|} \otimes \R^{|I|}$ is
the covariance matrix of the Gaussian component of $X$ and  $\nu$
stands for the L\'evy measure (for definitions related to infinite
divisibility we refer to \cite[Section 8]{Sato}). We shall
associate with $\nu$ its two-dimensional characteristics
$\nu_{kl}$. If\break
$\pi_{kl}:\R^{|I|}\rightarrow\R^2$ are
standard projections on $\R^2$, i.e.
$$\pi_{kl}(x_1,x_2,\ldots,x_{|I|})=(x_k,x_l),\quad 1\leq k<l\leq|I|,$$
we define $\nu_{kl}$ on $\R^2$ by the formula
\begin{equation}\label{dwawymiary}
 \nu_{kl}(A)  = (\nu \circ \pi_{kl}^{-1})\big(A\cap (\R^2 \setminus \{0\})\big).
\end{equation}

Notice that $\nu_{kl}$ is a L\'evy measure on $\R^2$, but it does
not have to be the two-dimensional projection of $\nu$.

A combination of results by Pitt \cite{Pitt} and Resnick
\cite{Resnick} states that non-negativity of all entries of
$\Sigma$ together with  the concentration of the L\'evy measure
$\nu$ on $(\R_+)^{|I|}\cup(\R_-)^{|I|}$ are enough for association
of $X$.  Theorem \ref{thid} establishes analogous conditions for
association between blocks of an infinitely divisible random
vector.

\begin{thm}\label{thid} Let $X \sim \mathcal{ID}(a,\Sigma,\nu)$.
If for all $k,l\in I$, which {\em are not in the same block},
\begin{description}
\item{(i)} $\sigma_{kl}$ are non-negative,
\item{(ii)} the measures $\nu_{kl}$ are concentrated on $(\R_-)^2 \cup (\R_+)^2$,
\end{description}
then $X$ is associated between blocks.
\end{thm}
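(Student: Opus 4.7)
\smallskip
\noindent\textbf{Proof plan.} I would apply the HPS covariance interpolation formula (Proposition~\ref{FormHPS}) to $F(X)=g(f_1(X(I_1)),\ldots,f_n(X(I_n)))$ and $G(X)=h(f_1(X(I_1)),\ldots,f_n(X(I_n)))$ and show that the resulting Gaussian and L\'evy contributions to $\Cov(F(X),G(X))$ are each non-negative. That would yield association of the random vector $(f_1(X(I_1)),\ldots,f_n(X(I_n)))$ and hence block-association of $X$ in the sense of Definition~\ref{def_blocks}. A preliminary mollification step lets us assume the $f_j$, $g$, $h$ are smooth and non-decreasing; the conclusion for general monotone functions then follows by the weak convergence closure of $\mathcal{P}^+_{\mathcal{J}}$.

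The HPS identity expresses the covariance as an interpolation integral of an expected carr\'e-du-champ-type integrand
$$\sum_{k,l\in I}\sigma_{kl}\,\partial_k F\,\partial_l G+\int_{\R^{|I|}}\bigl(F(\cdot+u)-F(\cdot)\bigr)\bigl(G(\cdot+u)-G(\cdot)\bigr)\,\nu(du)$$
evaluated at a randomised single point. I would split the Gaussian double sum by whether $k,l$ lie in different blocks or in the same block $I_j$. Cross-block pairs contribute non-negatively because $\sigma_{kl}\geq 0$ by (i) and $\partial_k F,\partial_l G\geq 0$ (both $F$ and $G$ are coordinate-wise non-decreasing as compositions of such). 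For $k,l\in I_j$ the chain rule gives $\partial_kF=(\partial_jg)(\partial_kf_j)$ and $\partial_lG=(\partial_jh)(\partial_lf_j)$, so the within-block sum collapses to $(\partial_jg)(\partial_jh)\,\langle\Sigma_{I_j}\nabla f_j,\nabla f_j\rangle\geq 0$, since the principal submatrix $\Sigma_{I_j}$ of the positive semi-definite matrix $\Sigma$ is itself positive semi-definite, while $\partial_jg,\partial_jh\geq 0$.

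For the L\'evy integral I would first observe that condition (ii) forces $\nu$ to be concentrated on $B_+\cup B_-\cup\bigcup_{j=1}^n C_j$, where $B_\pm=\{u\neq 0:\pm u\geq 0\}$ and $C_j=\{u\neq 0:u_k=0\text{ for all }k\notin I_j\}$. Indeed any $u$ outside this union would supply a positive coordinate $u_k$ and a negative coordinate $u_l$ with $k,l$ in distinct blocks, contradicting the support hypothesis on $\nu_{kl}$ (a short pigeonhole case analysis). On $B_\pm$, coordinate-wise monotonicity of $F,G$ forces $F(\cdot+u)-F(\cdot)$ and $G(\cdot+u)-G(\cdot)$ to share a common sign pointwise. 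On $C_j$ both differences reduce to a change in the $j$-th argument of $g$ (respectively $h$) driven by the \emph{same} scalar $f_j(x(I_j)+u(I_j))-f_j(x(I_j))$, and hence again share a common sign through the non-decreasing $j$-th coordinate of $g$ and $h$. The L\'evy integrand is therefore pointwise non-negative.

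The delicate point is whether the concrete version of Proposition~\ref{FormHPS} really evaluates the Gaussian integrand at a single point: the within-block factorisation depends on the identical vector $\nabla f_j$ appearing on both sides of $\Sigma_{I_j}$, and positive semi-definiteness would not rescue a bilinear form with two distinct arguments. Should the HPS identity involve two coupled copies of $X$ instead, one would need to verify that the coupling can be arranged to coincide on each block $I_j$, which ought to follow from the underlying semigroup construction but requires a careful check.
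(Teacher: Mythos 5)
Your plan is essentially the paper's own proof: reduce to smooth monotone $f_j,g,h$, apply the Houdr\'e--P\'erez-Abreu--Surgailis formula (Proposition~\ref{FormHPS}) to $g\circ F$ and $h\circ F$, split the Gaussian quadratic form by blocks via the chain rule, and reduce the jump term to a support condition on $\nu$ (your set $B_+\cup B_-\cup\bigcup_j C_j$ is exactly the set $S$ of Proposition~\ref{rownowazny}; the paper reaches the same conclusion by showing directly that $\nu$ gives no mass to the set where the two increments can disagree in sign). The cross-block Gaussian terms and the $B_\pm$ part of the jump term are handled identically and correctly in both arguments.

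The ``delicate point'' you isolate at the end is, however, a genuine gap, and it affects more of your argument than you acknowledge. In Proposition~\ref{FormHPS} the two gradients are evaluated at the coupled pair $(Y^{\alpha},Z^{\alpha})$, which are distinct points, so the within-block Gaussian contribution is $\langle\Sigma_{I_j}\nabla f_j(Y^{\alpha}),\nabla f_j(Z^{\alpha})\rangle$ with two \emph{different} non-negative vectors; positive semi-definiteness of $\Sigma_{I_j}$ does not make such a bilinear form non-negative (take $|I_j|=2$ with $\sigma_{kk}=\sigma_{ll}=1$, $\sigma_{kl}=-1$, and gradients $e_k$ at one point and $e_l$ at the other, realizable by $f_j(x)=\phi(x_k)+\psi(x_l)$). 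So the single-point collapse you rely on is not available, and the ``careful check'' you defer is precisely the missing step. The same two-point phenomenon breaks your treatment of $C_j$: the two jump increments are $f_j(Y^{\alpha}(I_j)+u(I_j))-f_j(Y^{\alpha}(I_j))$ and $f_j(Z^{\alpha}(I_j)+u(I_j))-f_j(Z^{\alpha}(I_j))$ --- the same $u$ but different base points --- and for $u\in C_j$ with coordinates of both signs inside the block (which hypothesis (ii) does not exclude) these increments need not share a sign, again with $f_j(x)=\phi(x_k)+\psi(x_l)$ and $u_k>0>u_l$. Hence the claimed pointwise non-negativity of the L\'evy integrand fails on $C_j$, and this is \emph{not} covered by your caveat. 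To be fair, the paper's own proof passes over both points: it justifies the within-block Gaussian sum only by remarking that the $\sigma_{kl}$, $k,l\in I_i$, form a covariance matrix, and its union $\bigcup_{i\neq j}B_{ij}$ silently drops the diagonal sets $\{u: f_i(y+u)<f_i(y),\ f_i(z+u)>f_i(z)\}$. You have therefore located exactly where the published argument is thin, but neither your proposal nor its closing caveat supplies the additional idea (e.g.\ an argument valid only after taking the expectation over the exchangeable pair $(Y^{\alpha},Z^{\alpha})$) needed to close it.
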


Let $X\sim\mathcal{ID}(a,\Sigma,\nu)$ and let $\varphi$ be given
by
 (\ref{ajeq1}).
Define
$$\varphi_0(r,s)=\varphi(r)\varphi(s),\quad
\varphi_1(r,s)=\varphi(r+s),\quad r,s\in\R^{|I|}.$$ For each
$\alpha \in [0,1]$, let $(Y^{\alpha},Z^{\alpha})$ be an infinitely
divisible
 random vector of dimension $2|I|$ with distribution given by the
characteristic function
$$\varphi_{\alpha}(r,s)=\varphi_0^{1-\alpha}(r,s)\varphi_1^{\alpha}(r,s).$$
Then for each $\alpha \in [0,1]$ we have  $Y^{\alpha}\sim
Z^{\alpha} \sim X$ and the vector $(Y^{\alpha},Z^{\alpha})$
``interpolates" between independent copies $Y^0$, $Z^0$ of the
vector $X$ and  the totally dependent copies $Y^1=Z^1$ of $X$. We
are ready to restate the covariance formula due to Houdr\'e,
Perez-Abreu and Surgailis \cite{HPS}.

\begin{prop}\label{FormHPS}
For any functions $\psi_1,\psi_2\in\mathcal{C}^1_b(\R^{|I|})$
(continuously differentiable with bounded derivatives)
\begin{gather*}
\Covth(\psi_1(X),\psi_2(X))=\\
=\int_0^1\bE\left(\langle\Sigma\nabla\psi_1(Y^{\alpha}),
\nabla\psi_2(Z^{\alpha})\rangle+\int_{\R^{|I|}}\Delta_u
\psi_1(Y^{\alpha})\Delta_u\psi_2(Z^{\alpha})\;\nu(du)\right)\;d\alpha,
\end{gather*}
where $\nabla$ is the gradient operator and
$\Delta_u\psi(x)=\psi(x+u)-\psi(x)$.
\end{prop}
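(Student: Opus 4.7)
My plan is to apply Proposition \ref{FormHPS} directly to $\psi_1 := g\circ\vec f$ and $\psi_2 := h\circ\vec f$, where $\vec f(x) := (f_1(x(I_1)),\ldots,f_n(x(I_n)))$, and to analyze the resulting integrand exploiting the block structure forced on $\Sigma$ and $\nu$ by conditions (i)--(ii). A standard mollification, combined with Proposition 2.4 to pass from the $C^1_b$ case back to general non-decreasing functions under weak convergence, reduces matters to $f_m,g,h\in C^1_b$, so that $\psi_1,\psi_2\in C^1_b(\R^{|I|})$ and the formula applies.

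The chain rule gives $\nabla\psi_1 = \sum_m G_m\nabla F_m$, where $G_m(x) := \partial_m g(\vec f(x))\ge 0$ and $F_m(x) := f_m(x(I_m))$ has gradient supported on $I_m$ with entries $\partial_k f_m\ge 0$; the analogous decomposition holds for $\psi_2$ with $H_m := \partial_m h\circ\vec f\ge 0$. Substituting into the Gaussian integrand of Proposition \ref{FormHPS} and grouping by pairs of blocks, the sum over $m\ne m'$ is pointwise non-negative by (i): every contribution carries $\sigma_{kl}\ge 0$ (with $k\in I_m$, $l\in I_{m'}$, $m\ne m'$) multiplied by the non-negative factors $G_m$, $H_{m'}$, $\partial_k f_m$, $\partial_l f_{m'}$. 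For the jump integrand, (ii) forces $\operatorname{supp}(\nu)\subseteq S^+\cup S^-\cup\bigcup_m S_m$, where $S^\pm$ denote the positive/negative orthants of $\R^{|I|}$ and $S_m := \{u:u(I_{m'})=0\text{ for }m'\ne m\}$: for $u\in S^\pm$ monotonicity of $\psi_i$ forces both $\Delta_u\psi_i$ to carry the sign of $u$ and their product to be pointwise $\ge 0$, and the same conclusion holds for $u\in S_m$ with $u(I_m)$ of a single sign (only the $m$-th coordinate of $\vec f$ then varies, monotonically).

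The remaining contributions---the within-block Gaussian terms $\sum_m G_m(Y^\alpha)H_m(Z^\alpha)\langle\Sigma_m\nabla f_m(Y^\alpha(I_m)),\nabla f_m(Z^\alpha(I_m))\rangle$ and the within-block jump integrals over $u\in S_m$ with $u(I_m)$ of mixed signs---are in general not pointwise non-negative. To dispose of them I would decompose $X$ as an independent sum $X = X^G + X^+_{\mathrm{cr}} + X^-_{\mathrm{cr}} + \sum_m X^{(m)}$ corresponding to the splitting of $\nu$ into its restrictions to $S^\pm\setminus\bigcup_m S_m$ (uniform-sign jumps crossing at least two blocks) and to each $S_m$. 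The vectors $X^\pm_{\mathrm{cr}}$ are fully associated by the Pitt--Resnick result recalled just before the theorem (L\'evy measure on one orthant, zero Gaussian part), each $X^{(m)}$ satisfies $X^{(m)}(I_{m'}) = 0$ for $m'\ne m$ and is therefore trivially associated between blocks, and the Gaussian summand $X^G$ is exactly the content of Theorem \ref{thgau}. A sum-stability lemma---independent summands, each associated between blocks, assemble into an associated-between-blocks vector, proved via the covariance decomposition $\Cov(\cdot,\cdot) = \bE[\Cov(\cdot,\cdot\mid V)] + \Cov(\bE[\cdot\mid V],\bE[\cdot\mid V])$ and the fact that conditional expectations of $g\circ\vec f$ retain a similar block-composed form---then combines the pieces.

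The main obstacle is twofold. First, the Gaussian case (Theorem \ref{thgau}) has to be handled separately, most naturally by exhibiting a representation $X^G\stackrel{d}{=} L\xi + R$ with $\xi$ a standard Gaussian independent of $R$, $L$ entrywise non-negative, and $R$ Gaussian with block-diagonal covariance (so its blocks are independent): writing $\Sigma = LL^\top + \Cov(R)$ requires a non-negative factorization of the between-block structure of $\Sigma$ compatible with a PSD block-diagonal residual, which must be constructed explicitly (for instance, by introducing a column of $L$ for every maximal set of positively-correlated across-block indices) and whose existence under (i) is the delicate point. Second, the stability lemma demands a careful conditional analysis: the term $\Cov(\bE[\cdot\mid V],\bE[\cdot\mid V])$ is non-negative only because the conditional expectations inherit the $g\circ\vec f$ block-composed form, a fact that uses the independence of the summands together with the particular shape of the test functions permitted by the definition of association between blocks.
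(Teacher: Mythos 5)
There is a fundamental mismatch here: the statement you were asked to prove is Proposition \ref{FormHPS} itself, i.e.\ the covariance interpolation formula
\[
\Cov(\psi_1(X),\psi_2(X))=\int_0^1\bE\Bigl(\langle\Sigma\nabla\psi_1(Y^{\alpha}),\nabla\psi_2(Z^{\alpha})\rangle+\int_{\R^{|I|}}\Delta_u\psi_1(Y^{\alpha})\Delta_u\psi_2(Z^{\alpha})\,\nu(du)\Bigr)d\alpha ,
\]
whereas your proposal opens with ``apply Proposition \ref{FormHPS} directly to $\psi_1:=g\circ\vec f$, $\psi_2:=h\circ\vec f$'' and then goes on to derive the block-association conclusion of Theorem \ref{thid}. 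In other words, you have assumed the target statement and written a proof sketch of a \emph{different} result; nothing in your text establishes the interpolation identity itself, so with respect to the assigned statement the entire argument is missing (and circular). For the record, the paper does not prove this proposition either: it is explicitly a restatement of a result of Houdr\'e, P\'erez-Abreu and Surgailis \cite{HPS}, whose proof rests on writing $\Cov(\psi_1(X),\psi_2(X))=\int_0^1\frac{d}{d\alpha}\bE\,\psi_1(Y^{\alpha})\psi_2(Z^{\alpha})\,d\alpha$ for the interpolating family with characteristic functions $\varphi_\alpha=\varphi_0^{1-\alpha}\varphi_1^{\alpha}$ and computing the $\alpha$-derivative through the L\'evy--Khinchin exponent (Fourier inversion for $C^1_b$ test functions); none of that machinery appears in your proposal.

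As a secondary remark: even read as a proof of Theorem \ref{thid}, your route diverges from the paper's. The paper shows the jump integrand is non-negative $\nu$-almost everywhere by a direct set-theoretic argument (the sets $A_+$, $A_-$ and $B_{ij}$, with $\nu(B_{ij})=0$ following from condition (ii) on the two-dimensional marginals $\nu_{kl}$), and handles the within-block Gaussian term pointwise rather than by decomposing $X$ into independent summands. Your alternative --- splitting $\nu$ by support, invoking Pitt--Resnick for the orthant pieces, and gluing with a sum-stability lemma --- is plausible in outline but, as you yourself note, leaves the non-negative factorization of $\Sigma$ and the conditional-expectation step as unproven delicate points, both of which the paper's pointwise argument avoids entirely. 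But the primary issue remains that the proposition you were asked to prove is nowhere proved.
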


Now we can turn to the proof of Theorem \ref{thid}, keeping in
mind that it is enough to study (\ref{eq1}) only for functions
from ${C}^1_b(\R^{|I|})$ (see e.g. \cite[Theorem 1.5]{BS}).

\begin{proof} Choose non-decreasing and $C^1_b$ functions
$f_i:\R^{|I_i|}\rightarrow\R$, $k=1,2,\ldots,n$, and denote by $F$
the mapping from $\R^{|I|}$ into $\R^n$ given by
$$F(x_k, k\in I)=(f_1(x_k, k\in I_1),\ldots,f_n(x_k, k\in I_n)).$$
We will identify the functions $f_i$ with their corresponding
extensions\break
$\widetilde{f}_i(x) = f_i(\pi_{I_i}(x))$.

Let $g,h : \R^n \to \R$ be non-decreasing and $C^1_b$. Our goal is
to establish the sign of the covariance
\begin{equation}\label{covfor}
\begin{split}
\Cov(g(F(X)),&\;h(F(X)))=\\
&=\int_0^1\bE\big(\langle\Sigma\nabla(g\circ F) (
Y^{\alpha}),\nabla(h\circ F)
( Z^{\alpha})\rangle+\\
&\quad\quad +\int_{\R^{|I|}}\Delta_{u} (g\circ F) (
Y^{\alpha})\Delta_{u}(h\circ F) (Z^{\alpha})\;\nu(d
u)\big)\;d\alpha.
\end{split}\end{equation}
Applying the chain rule we get that $\nabla(g\circ F)(y)$ is the
product of the transposed matrix of partial derivatives of $F$ and
the vector $(\nabla g)(F(y))$. The first from these factors is the
matrix with $n$ columns and $|I|$ rows, with non-zero elements
only for $k\in I_i$ ($i$ is the kolumn and $k$ is the row number).
So
\[
\big(\nabla(g\circ F)(y)\big)_k=\begin{cases} \displaystyle
\frac{\partial f_i}{\partial x_k}(y)\frac{\partial g}{\partial
v_i}(F(y))& \text{if  $k\in
I_i,\; i=1,2,\ldots,n,$}\\
0 &\text{otherwise.}
\end{cases}
\]

Hence the scalar product in the covariance formula has the
following form.
\begin{align}
&\langle\Sigma\nabla(g\circ F) (y),\nabla(h\circ F) (z)\rangle= \nonumber \\
&\quad=\sum_{i=1}^n\sum_{j=1}^n\sum_{k\in I_i} \sum_{l\in I_j
}\sigma_{kl}\frac{\partial f_i}{\partial x_k}(y)\frac{\partial
g}{\partial v_i}(F(y))\frac{\partial f_j}{\partial x_l}(
z)\frac{\partial h}{\partial v_j}(F(z))= \nonumber\\
&\quad=\sum_{i=1}^n\frac{\partial g}{\partial v_i}(F(y))
\frac{\partial h}{\partial v_i}(F(z))\left(\sum_{k\in I_i}
\sum_{l\in I_i}\sigma_{kl}\frac{\partial f_i}{\partial x_k}(
y)\frac{\partial f_i}{\partial
x_l}(z)\right)+\label{suma1}\\
&\quad\quad+\sum_{i=1}^n\sum_{\substack{j=1\\j\neq i}}^n\sum_{k\in
I_i} \sum_{l\in I_j }\sigma_{kl}\frac{\partial f_i}{\partial
x_k}(y) \frac{\partial g}{\partial v_i}(F(y))\frac{\partial
f_j}{\partial x_l}(z)\frac{\partial h}{\partial v_j}(F(
z)).\label{suma2}
\end{align}
The expression in line (\ref{suma1}) is non-negative because the
partial derivatives are non-negative  and
$$\sum_{k\in I_i} \sum_{l\in I_i}
\sigma_{kl}\frac{\partial f_i}{\partial x_k}( y) \frac{\partial
f_i}{\partial x_l}(z)\geq0$$ due to the fact that $\sigma_{kl}$
for $k,l\in I_i$ are entries of the covariance matrix of the
vector $X(I_i)$. The expression in line (\ref{suma2}) is
non-negative for all partial derivatives are non-negative and
$\sigma_{kl} \geq 0$ if  $k,l$  are not in the same block.

It remains to check that the second summand in (\ref{covfor}) is
non-negative. Let us consider the following sets.
\begin{eqnarray*}
&&A_+ = \{u\,:\, F(y+u) \geq F(y)\}\cap \{u\,:\,F(z+u) \geq F(z)\}\\
&&A_- = \{u\,:\, F(y+u) \leq F(y)\}\cap \{u\,:\,F(z+u) \leq
F(z)\}.
\end{eqnarray*}
It is easy to see that on the set $A = A_+ \cup A_-$
\[\Delta_{u}(g\circ F)(y)\Delta_{u}(h\circ F)(z) = \big(g(F(y+u)) - g(F(y))\big)
\big(h(F(z+u)) - h(F(z))\big) \geq 0,\] for both factors are at
the same time either non-negative or non-positive. It follows that
it is enough to prove that
\begin{equation}\label{glowny}
\nu (A^c) = \nu (A_+^c \cap A_-^c) = 0,
\end{equation}
where $B^c$ is the complement of $B$. We have
\[ A_+ = \bigcap_{i=1}^n \{u\,:\, f_i(y+u)\geq f_i(y),\, f_i(z+u)\geq f_i(z)\},\]
hence
\[ A_+^c = \bigcup_{i=1}^n \{u\,:\, f_i(y+u) < f_i(y)\}\cup \{u\,:\, f_i(z+u) < f_i(z)\}\]
and similarly
\[ A_-^c = \bigcup_{j=1}^n \{u\,:\, f_j(y+u) > f_j(y)\}\cup \{u\,:\, f_j(z+u) > f_j(z)\}.\]

So $A^c = \bigcup_{1 \leq i \neq j \leq n}^n B_{ij}$, where
\begin{eqnarray*}
B_{ij} &=& \{u\,:\, f_i(y+u) < f_i(y), f_j(y+u) > f_j(y)\} \\
& &\cup\  \{u\,:\, f_i(y+u) < f_i(y), f_j(z+u) > f_j(z)\} \\
& &\cup\  \{u\,:\, f_i(z+u) < f_i(z), f_j(y+u) > f_j(y)\} \\
& &\cup\  \{u\,:\, f_i(z+u) < f_i(z), f_j(z+u) > f_j(z)\}.
\end{eqnarray*}

Since $f_i$'s are non-decreasing,  $f_i(x+u) < f_i(x)$ implies
that there exists $k\in I_i$ such that $u_k < 0$. (If $u$ were in
$(\R_+)^{I_i}$ we would have $f_i (x + u) \geq f_i(x)$).
Similarly, $f_i(x+u) > f_i(x)$ implies that there exists $l\in
I_i$ such that $u_l > 0$. Thus we obtain that
\[
B_{ij} \subset \bigcup_{k\in I_i} \bigcup_{l\in I_j}  \{u\,:\, u_k
<0, u_l > 0\}.
\]
But $i\neq j$ and so $k$ and $l$ in the above union of sets {\em
are not in the same block}. It follows that
\[ \nu (\{u\,:\, u_k < 0, u_l > 0\}) = \nu_{kl}\big((-\infty,0)\times(0,+\infty)\big) = 0.\]
Hence $\nu(B_{ij}) = 0$ and $\nu(A^c) = 0$.
\end{proof}

For future purposes we need a convenient reformulation of the
condition imposed in Theorem \ref{thid} on  the two-dimensional
L\'evy measures $\nu_{kl}$.

\begin{prop}\label{rownowazny}
 Let $\nu$ be a measure on $\R^{|I|}$ and let measures $\nu_{kl}$ on $\R^2$ be defined by
(\ref{dwawymiary}). Then the following statements are equivalent:
\begin{description}
\item {(i)} For all $k,l\in I$, which {\em are not in
the same block},  the measures $\nu_{kl}$ are concentrated on
$(\R_-)^2 \cup (\R_+)^2$,
 \item{(ii)} The measure $\nu$ is concentrated on the set
\end{description}
\begin{equation}\label{nosnik}
S = (\R_+)^{|I|}\cup(\R_-)^{|I|}\cup U,
\end{equation}
where
\[ U = \bigcup_{m=1}^{n}\left(\{0\}^{\sum_{i=1}^{m-1}|I_i|}\times\R^{|I_m|}
\times\{0\}^{\sum_{j=m+1}^n|I_j|}\right).\]
\end{prop}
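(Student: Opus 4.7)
The plan is to prove both implications directly by a support analysis of $\nu$, leveraging the identity $\nu(\pi_{kl}^{-1}(B)) = \nu_{kl}(B)$ whenever $0\notin B$, which is immediate from the definition (\ref{dwawymiary}).

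For the direction (ii) $\Rightarrow$ (i), I would fix $k,l$ in different blocks and check that every $u \in S$ satisfies $(u_k,u_l) \in (\R_+)^2 \cup (\R_-)^2$. The cases $u \in (\R_\pm)^{|I|}$ are immediate; if $u \in U$, then its nonzero coordinates all lie inside a single block $I_m$, and since $k$ and $l$ cannot both belong to $I_m$, at least one of $u_k, u_l$ vanishes. The pair $(u_k,u_l)$ then sits on a coordinate axis, which is contained in $(\R_+)^2 \cup (\R_-)^2$. Pushing $\nu$ forward along $\pi_{kl}$ yields (i).

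For the harder direction (i) $\Rightarrow$ (ii), I would establish the finite inclusion
\[
S^c \;\subseteq\; \bigcup_{k,l \text{ in different blocks}} \pi_{kl}^{-1}(Q),
\qquad Q := \R^2 \setminus \bigl((\R_+)^2 \cup (\R_-)^2\bigr),
\]
from which $\nu(S^c)=0$ follows at once: each $\nu(\pi_{kl}^{-1}(Q)) = \nu_{kl}(Q) = 0$ by hypothesis (i), and the union runs over finitely many pairs. The heart of the argument is a short combinatorial claim: every $u \in S^c$ admits coordinates $u_p > 0$ and $u_q < 0$ with $p, q$ in different blocks. To verify this, set $P = \{i : u_i > 0\}$ and $N = \{i : u_i < 0\}$; both are nonempty because $u \notin (\R_+)^{|I|}$ and $u \notin (\R_-)^{|I|}$. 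If no $p \in P$ and $q \in N$ were in different blocks, then picking any $p_0 \in P$ would force $N$ into the block of $p_0$, and symmetrically $P$ into that same block, so the support $P\cup N$ would lie in one block and give $u \in U$, contradicting $u \in S^c$. Thus the desired $p,q$ exist and $(u_p,u_q) \in Q$.

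The only genuinely nontrivial ingredient is this combinatorial lemma that isolates a pair of opposite-sign coordinates in different blocks; once it is in hand, the remaining step is a routine manipulation of pushforward measures together with the definition of $\nu_{kl}$.
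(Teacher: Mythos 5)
Your proposal is correct and follows essentially the same route as the paper: your set $\bigcup_{k\not\sim l}\pi_{kl}^{-1}(Q)$ is exactly the paper's $D=\bigcup_{k\not\sim l}D_k^+\cap D_l^-$, and your combinatorial lemma is precisely the justification of the paper's (unproved) observation that $\R^{|I|}=S\cup D$. You merely spell out the details the paper leaves implicit, including the easy direction (ii) $\Rightarrow$ (i), which the paper dismisses as clear.
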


\begin{proof}
It is clear that if $\nu$ concentrates on $S$ given in
(\ref{nosnik}), then $\nu_{kl}$ satisfy (i). Thus we have to prove
the implication (i) $\Rightarrow$ (ii) only. For notational
convenience, let us write $k\sim l$ if $k$ and  $l$ {\em are} in
the same block and $k\not\sim l$ otherwise. Let us also denote
\[D_k^+ = \{ x \in \R^{|I|}\,:\, x_k > 0\},\quad D_k^- = \{ x \in
\R^{|I|}\,:\, x_k < 0\}\] and
\[D = \bigcup_{(k,l)\,:\, k\not\sim l} D_k^+ \cap D_l^-. \]
Then (i) implies $\nu (D_k^+\cap D_l^-) = 0$ for all pairs $(k,l)$
such that $k\not\sim l$ and so
\begin{equation}\label{dezero}
\nu (D) = 0.
\end{equation}
Now (ii) follows from  (\ref{nosnik}), (\ref{dezero}) and the
observation that
\[ \R^{|I|} = (\R_+)^{|I|}\cup(\R_-)^{|I|}\cup U \cup D = S \cup D.\]
\end{proof}

The example given by Samorodnitsky \cite{Samorod} shows that there
exists an  associated (so associated between blocks of the length
1, too) random vector with 2-dimensional infinitely divisible
distribution and with L\'evy measure assigning a positive mass out
of the set $(\R_+)^2\cup(\R_-)^2$. So in Theorem \ref{thid} the
condition related to concentration of  measures $\nu_{kl}$  is not
necessary for association between blocks of the multidimensional
vector with infinitely divisible distribution.

On the other hand there exists a natural framework proposed by
Samorodnitsky \emph{ibid.} in which  the concentration of  the
L\'evy measure on  $(\R_+)^{|I|}\cup~(\R_-)^{|I|}$ {\em is}
necessary. The theorem below can be proved in much the same way as
Theorem 3.1 \emph{ibid.} or Proposition 3 in \cite{HPS}.

\begin{thm}
Let $X\sim\mathcal{ID}(a,\Sigma,\nu)$.   Let $\{X_t,\;t\geq0\}$ be
a L\'evy process with $X_1=_d X$. Then the following are
equivalent.
\begin{description}
\item{(i)} For every $t>0$ and any choice
of non-decreasing functions $f_1:\R^{|I_1|}\rightarrow~\R,$
$\ldots, f_n:\R^{|I_n|}\rightarrow\R$, the vector
$$(f_1((X_t)_{I_1}),\ldots,f_n((X_t)_{I_n}))$$
is associated.
\item{(ii)} For all indices $k$, $l$ which {\em are not in
the same block}, the entries $\sigma_{kl}$ of the matrix $\Sigma$
are non-negative and the L\'evy measures $\nu_{kl}$ concentrate on
the set $(\R_+)^{2}\cup(\R_-)^{2}$.
\end{description}
\end{thm}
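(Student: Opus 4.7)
The easy direction (ii)$\,\Rightarrow\,$(i) is immediate from Theorem~\ref{thid} applied to $X_t$, whose L\'evy-Khinchin triplet is $(ta, t\Sigma, t\nu)$: multiplication by $t>0$ preserves both the sign of each $\sigma_{kl}$ and the support of each $\nu_{kl}$, so the hypotheses of Theorem~\ref{thid} transfer to $X_t$ for every $t>0$ and yield that $X_t$ is associated between blocks based on $I_1,\ldots,I_n$, which is exactly (i).

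The substantive direction (i)$\,\Rightarrow\,$(ii) reduces, as in \cite{HPS} and \cite{Samorod}, to a small-$t$ analysis of Proposition~\ref{FormHPS}. I would fix $k\in I_i$ and $l\in I_j$ with $i\neq j$, let $f_i$ depend only on $x_k$ through a bounded non-decreasing $\phi_1\in C^1_b(\R)$, $f_j$ similarly through $\phi_2$, with the remaining $f_m$ constant, and choose outer projections $g(v)=v_i$, $h(v)=v_j$; after subtracting $\phi_i(0)$ so that $\phi_1(0)=\phi_2(0)=0$, condition (i) becomes $\Cov(\phi_1((X_t)_k),\phi_2((X_t)_l))\geq 0$ for every $t>0$. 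Plugging the coordinate extensions of $\phi_1,\phi_2$ into Proposition~\ref{FormHPS} applied to $X_t$ (triplet $(ta,t\Sigma,t\nu)$), dividing by $t$, and letting $t\to 0^+$---so that the interpolating vectors $Y^{\alpha}_t,Z^{\alpha}_t\to 0$ in probability uniformly in $\alpha\in[0,1]$, with the interchange of limits justified by the $C^1_b$ bound $|\Delta_{u_k}\phi_1\,\Delta_{u_l}\phi_2|\leq C(|u|^2\wedge 1)$ together with $\int(|u|^2\wedge 1)\,\nu(du)<\infty$---one arrives at the master inequality
\begin{equation*}
\sigma_{kl}\,\phi_1'(0)\,\phi_2'(0)+\int_{\R^2}\phi_1(u_k)\,\phi_2(u_l)\,\nu_{kl}(du_k,du_l)\geq 0.
\end{equation*}

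Two families of test functions then separate the halves of (ii). Taking $\phi_1=\phi_2=\phi^{\epsilon}$, a smooth non-decreasing truncation of the identity with $(\phi^{\epsilon})'(0)=1$ and $|\phi^{\epsilon}|\leq\epsilon$, the integral is dominated by $\int(|u|^2\wedge\epsilon^2)\,\nu(du)\to 0$ as $\epsilon\to 0$, leaving $\sigma_{kl}\geq 0$. Taking instead $\phi_1$ a smoothing of $\jed_{[\epsilon,\infty)}$ and $\phi_2$ a smoothing of $-\jed_{(-\infty,-\epsilon]}$ (both still non-decreasing, with $\phi_i(0)=0$ and $\phi_i'(0)=0$), the master inequality collapses to $-\nu_{kl}(\{u_k\geq\epsilon,\,u_l\leq-\epsilon\})\geq 0$, forcing that set to be $\nu_{kl}$-null; letting $\epsilon\downarrow 0$ and interchanging $k\leftrightarrow l$ exhausts both off-diagonal quadrants and yields the concentration of $\nu_{kl}$ on $(\R_-)^2\cup(\R_+)^2$. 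The principal technical obstacle---and the one the paper defers to \cite{HPS} and \cite{Samorod}---is precisely the uniform-in-$\alpha$ dominated-convergence argument underlying the displayed master inequality.
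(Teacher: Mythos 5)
Your proposal is correct and follows exactly the route the paper intends: the paper gives no proof of this theorem, saying only that it "can be proved in much the same way as Theorem 3.1" of \cite{Samorod} or Proposition 3 of \cite{HPS}, and that is precisely the small-$t$ expansion of the interpolation formula that you carry out. Both your sufficiency argument (Theorem \ref{thid} applied to the triplet $(ta,t\Sigma,t\nu)$) and your necessity argument (the limiting inequality as $t\to 0^+$, separated by the two families of test functions) coincide with that cited method, including the dominated-convergence step you correctly identify as the technical crux.
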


\section{Block-association of increments of stochastic processes}

Let $\{X_t=(X_t^1,X_t^2,\ldots,X_t^d),t\in\R\}$ be a
$d$-dimensional stochastic process and let $0<
t_1<t_2<\ldots<t_n$. We can consider an $nd$-dimensional random
vector formed by the increments
$$X_{t_1}-X_0,\quad X_{t_2}-X_{t_1},\quad\ldots,\quad
X_{t_n}-X_{t_{n-1}}.$$ Such vector has naturally distinguished
blocks of the length $d$. The first is formed by the components of
$X_{t_1}-X_0$, the second by the components of $X_{t_2}-X_{t_1}$
and so on. Hence, according to Definition \ref{def_blocks}, we
have

\begin{defn}
A $d$-dimensional stochastic process $\{X_t,t\in\R\}$ \emph{has
block-associated increments} if for every $n\in\bN$ and any choice
of $0< t_1<t_2<\ldots<t_n$ the increments
$$X_{t_1}-X_0,\quad X_{t_2}-X_{t_1},\quad\ldots,\quad
X_{t_n}-X_{t_{n-1}}$$ form the vector associated between blocks.
\end{defn}

With such a definition we have the expected result.

\begin{thm}
Every process with independent increments has block-associated
increments.
\end{thm}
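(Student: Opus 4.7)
The plan is to observe that this theorem is an immediate consequence of the proposition in Section 2 asserting that if the blocks $X(I_k)$, $k=1,\ldots,n$, are independent, then $X$ is associated between blocks. In the present setting, the blocks of the $nd$-dimensional vector of increments
$X_{t_1}-X_0,\,X_{t_2}-X_{t_1},\,\ldots,\,X_{t_n}-X_{t_{n-1}}$
are precisely the $n$ increment vectors $\Delta_k X = X_{t_k}-X_{t_{k-1}}$, each of dimension $d$. Since $\{X_t\}$ has independent increments, these vectors are independent $\R^d$-valued random vectors, and the cited proposition applies verbatim.

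For completeness I would recall the one-line justification of the auxiliary proposition in the present concrete situation. Fix arbitrary non-decreasing functions $f_k\colon\R^d\to\R$ for $k=1,\ldots,n$. Since $\Delta_1 X,\ldots,\Delta_n X$ are independent random vectors, the real-valued random variables $f_1(\Delta_1 X),\ldots,f_n(\Delta_n X)$ are independent as well; and independent real-valued random variables are associated by the classical theorem of Esary, Proschan and Walkup quoted in the introduction. Hence $(f_1(\Delta_1 X),\ldots,f_n(\Delta_n X))$ is associated, which is exactly what Definition~\ref{def_blocks} demands for the chosen blocks' basis. As $n$ and the sampling points $0<t_1<\cdots<t_n$ were arbitrary, the process has block-associated increments in the sense of the definition preceding the theorem.

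There is essentially no obstacle. The only point requiring a little care is the bookkeeping that translates the definition of block-associated increments into Definition~\ref{def_blocks}: one takes the index set $I=\{1,\ldots,n\}\times\{1,\ldots,d\}$ with the natural partition into $n$ blocks $I_k=\{k\}\times\{1,\ldots,d\}$ of size $d$, so that the coordinates indexed by $I_k$ are the $d$ components of $\Delta_k X$. Once this identification is made the statement is a direct specialization of the independence-of-blocks proposition, and no further argument is needed.
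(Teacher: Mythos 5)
Your proposal is correct and matches the paper's intended argument: the theorem is stated there as an immediate consequence of the proposition that independence of the blocks $X(I_k)$ implies association between blocks, which is exactly the reduction you make. Your added one-line justification (independent increments $\Rightarrow$ independent real variables $f_k(\Delta_k X)$ $\Rightarrow$ association by Esary--Proschan--Walkup) is the standard reasoning behind that proposition and is sound.
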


Next we shall discuss Gaussian processes.

\begin{thm}\label{thgaus}
Let $\{X_t,t\geq0\}$ be a $d$-dimensional Gaussian process with
the covariance functions $K^{k,l}(s,t)=\Covth(X_s^k,X_t^l)$,
$k,l=1,\ldots, d$. The process $\{X_t,t\geq0\}$ has
block-associated increments if and only if its covariance
functions are L-superadditive on $\{(s,t);s\leq t\}$, i.e.
$$K^{k,l}(s_1,t_1)-K^{k,l}(s_2,t_1)-K^{k,l}(s_1,t_2)+K^{k,l}(s_2,t_2)\geq0$$
for all $0 \leq s_1 \leq s_2\leq t_1\leq t_2$. \label{proc_gauss}
\end{thm}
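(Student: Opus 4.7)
The plan is to reduce Theorem~\ref{proc_gauss} to the static Gaussian criterion given in Theorem~\ref{thgau}. For any choice $0<t_1<t_2<\ldots<t_n$, the $nd$-dimensional increment vector
\[(\Delta_1X,\Delta_2X,\ldots,\Delta_nX),\quad \Delta_iX=X_{t_i}-X_{t_{i-1}},\ t_0:=0,\]
is jointly Gaussian and carries the natural blocks' basis in which the $i$-th block consists of the $d$ coordinates of $\Delta_iX$. By Theorem~\ref{thgau}, the increment vector is associated between these blocks if and only if all covariances between coordinates that lie in different blocks are non-negative.

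The computational step is to identify these cross-block covariances. For $i<j$ and $k,l\in\{1,\ldots,d\}$, bilinearity and the definition of $K^{k,l}$ give
\[\Covth\bigl((\Delta_iX)^k,(\Delta_jX)^l\bigr)=K^{k,l}(t_i,t_j)-K^{k,l}(t_{i-1},t_j)-K^{k,l}(t_i,t_{j-1})+K^{k,l}(t_{i-1},t_{j-1}),\]
which is exactly the L-superadditivity increment of $K^{k,l}$ on the rectangle with corners $(t_{i-1},t_{j-1})$ and $(t_i,t_j)$, a rectangle that indeed lies in $\{s\leq t\}$ because $t_{i-1}<t_i\leq t_{j-1}<t_j$.

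Sufficiency is then immediate: if every $K^{k,l}$ is L-superadditive on $\{s\leq t\}$, then every cross-block covariance above is non-negative, so Theorem~\ref{thgau} yields block-association for the increment vector attached to any sampling $0<t_1<\ldots<t_n$, and hence block-association of increments in the sense of Definition~\ref{def_blocks} applied to the process.

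For the converse I would argue that, given arbitrary $0\leq s_1\leq s_2\leq t_1\leq t_2$, one can always choose a sampling whose increments realize the L-superadditivity expression as a cross-block covariance; block-association then forces the expression to be non-negative via Theorem~\ref{thgau}. The generic case $0<s_1<s_2<t_1<t_2$ is handled by sampling at $t_1'=s_1,\ t_2'=s_2,\ t_3'=t_1,\ t_4'=t_2$ and looking at $\Covth((\Delta_2X)^k,(\Delta_4X)^l)$. The boundary cases are dealt with separately: $s_1=0$ requires only three sample points $s_2,t_1,t_2$ and the covariance of $\Delta_1X$ with $\Delta_3X$; the case $s_2=t_1$ is captured by sampling $s_1,s_2,t_2$ and looking at consecutive (but still different) blocks $\Delta_2X$ and $\Delta_3X$; and the degenerate cases $s_1=s_2$ or $t_1=t_2$ make the L-superadditivity expression vanish identically, so there is nothing to prove. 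The main obstacle is precisely this bookkeeping: ensuring that every admissible quadruple $(s_1,s_2,t_1,t_2)$ is produced as cross-block indices of some allowed sampling, which is routine but needs the small case analysis above.
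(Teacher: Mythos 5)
Your proposal is correct and follows essentially the same route as the paper: reduce to the Gaussian criterion of Theorem~\ref{thgau} for the $nd$-dimensional increment vector and identify the cross-block covariances with the second-order differences of $K^{k,l}$. The only difference is that you spell out the boundary bookkeeping for the converse (choice of sampling points for each admissible quadruple), which the paper leaves implicit.
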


\begin{proof}
Let us consider the $nd$-dimensional vector
$$(X_{t_1}^1-X_0^1,\ldots,X_{t_1}^d-X_0^d,\ldots,X_{t_n}^1-X_{t_{n-1}}^1,\ldots,X_{t_n}^d-X_{t_{n-1}}^d),$$
where $0<t_1<t_2<\ldots<t_n$. As we know from Theorem \ref{thgau},
the process $\{X_t, t\geq0\}$ has block-associated increments if
and only if for all $k,l=1,\ldots,d$ and  $1\leq i < j \leq  n$,
$i\neq j$ the covariances
$$\sigma_{ij}^{k,l}=\Cov(X_{t_i}^k-X_{t_{i-1}}^k,X_{t_j}^l-X_{t_{j-1}}^l)$$
are non-negative. But
\[\begin{split}
0\leq\sigma_{ij}^{k,l}&=\Cov(X_{t_i}^k-X_{t_{i-1}}^k,X_{t_j}^l-X_{t_{j-1}}^l)\\
&=K^{k,l}(t_i,t_j)-K^{k,l}(t_i,t_{j-1})-K^{k,l}(t_{i-1},t_j)+K^{k,l}(t_{i-1},t_{j-1}).
\end{split}\]
\end{proof}

\begin{rem}
The notion of L-superadditivity is well known, see for example
Marshall, Olkin \cite[Ch. 6, Sect. D]{MO}.
\end{rem}

\begin{cor}
If the covariance functions $K^{k,l}$ ($k,l=1,\ldots,d$) of
the\break $d$-dimensional Gaussian process $\{X_t,t\geq0\}$ are
continuously twice differentiable for $s\neq t$, then $\{X_t ,
t\geq0\}$ has block-associated increments if and only if
$$\frac{\partial^2 }{\partial s\,\partial t}K^{k,l}(s,t)\geq0 \textrm{ for } s\neq t \textrm{ and } k,l=1,2.\ldots,d.$$
\end{cor}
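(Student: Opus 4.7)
The plan is to apply Theorem \ref{thgaus} and then convert the L-superadditivity condition on each $K^{k,l}$ into a pointwise condition on the mixed second derivative via the fundamental theorem of calculus. By Theorem \ref{thgaus}, $\{X_t,t\geq 0\}$ has block-associated increments if and only if, for every pair $k,l\in\{1,\ldots,d\}$, the rectangle increment
\[
\Delta^{k,l}(s_1,s_2;t_1,t_2):=K^{k,l}(s_2,t_2)-K^{k,l}(s_2,t_1)-K^{k,l}(s_1,t_2)+K^{k,l}(s_1,t_1)
\]
is non-negative for all $0\leq s_1\leq s_2\leq t_1\leq t_2$. Hence the task reduces to showing that, under the stated $C^2$ hypothesis off the diagonal, the family of these rectangle inequalities is equivalent to $\partial^2 K^{k,l}/(\partial s\,\partial t)\geq 0$ on $\{s\neq t\}$.

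For any rectangle $[s_1,s_2]\times[t_1,t_2]$ with $s_2<t_1$ the closure lies in the open set $\{s<t\}$, where $K^{k,l}$ is of class $C^2$, so two successive applications of the fundamental theorem of calculus give
\[
\Delta^{k,l}(s_1,s_2;t_1,t_2)=\int_{s_1}^{s_2}\int_{t_1}^{t_2}\frac{\partial^2 K^{k,l}}{\partial s\,\partial t}(s,t)\,dt\,ds.
\]
The implication $(\Leftarrow)$ is then immediate for rectangles strictly off the diagonal, and the degenerate case $s_2=t_1$ is recovered by letting $s_2\uparrow t_1$, using continuity of $K^{k,l}$ on the closed half-plane $\{s\leq t\}$. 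For the implication $(\Rightarrow)$, fix $(s_0,t_0)$ with $s_0<t_0$ and apply the assumed L-superadditivity to the small rectangle $[s_0,s_0+h]\times[t_0,t_0+h]$ with $0<h<t_0-s_0$; dividing the identity above by $h^2$ and letting $h\to 0^+$, continuity of the mixed partial (guaranteed by the $C^2$ assumption, together with Clairaut's theorem to ensure the order of differentiation is immaterial) yields $\partial^2 K^{k,l}/(\partial s\,\partial t)(s_0,t_0)\geq 0$.

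The only point requiring care is the behavior at the diagonal, where the $C^2$ hypothesis does not apply but the L-superadditivity statement still allows $s_2=t_1$; this is handled by continuity of the covariance functions and poses no real obstacle once the interior case is settled. Everything else is routine calculus applied coordinate-by-coordinate in $(k,l)$.
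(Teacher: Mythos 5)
Your proposal is correct and takes essentially the same route as the paper: reduce via Theorem \ref{thgaus} to L-superadditivity of each $K^{k,l}$ and then identify the rectangle increment with the double integral $\int\!\!\int \partial^2 K^{k,l}/(\partial s\,\partial t)$ over the rectangle, from which both implications follow since the disjoint intervals are arbitrary. You merely spell out the two directions and the diagonal-touching case $s_2=t_1$ more explicitly than the paper does.
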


\begin{proof}
The L-superadditivity of the covariance functions is, under the
corollary's assumptions, equivalent to the non-negativity of the
mixed second derivatives. Indeed,
\[\begin{split}
&K^{k,l}(t_i,t_j)-K^{k,l}(t_i,t_{j-1})-K^{k,l}(t_{i-1},t_j)+K^{k,l}(t_{i-1},t_{j-1})=\\
&=\int_{t_{i-1}}^{t_i}\left(\frac{\partial K^{k,l}}{\partial
u}(u,t_j)-\frac{\partial K^{k,l}}{\partial
u}(u,t_{j-1})\right)\;du\\
&= \int_{t_{i-1}}^{t_i}\int_{t_{j-1}}^{t_j}\frac{\partial^2
K^{k,l}}{\partial v\,\partial u}(u,v) \;dv\,du
\end{split}\]
and $(t_{i-1},t_i)$, $(t_{j-1},t_j)$ are arbitrary disjoint
intervals in $(0,+\infty)$.
\end{proof}

Similarly as Theorem \ref{thgau} produced Theorem \ref{thgaus},
one could also use Theorem \ref{thid} for writing a corresponding
result for infinitely divisible processes (processes with
infinitely divisible finitely dimensional distributions --- see
e.g. Maruyama \cite{Mar70} or Rajput and Rosinski \cite{RR89}). We
shall do that in a~special case and using Proposition
\ref{rownowazny}.

\begin{thm}
Let $\{X_t,t\geq0\}$ be a $d$-dimensional infinitely divisible
stochastic process. Let us suppose that for every choice of $0 =
t_0 < t_1<t_2<\ldots<t_n$ the distribution of $(X_0, X_{t_1},
X_{t_2}, \ldots, X_{t_n})$ doesn't have the Gaussian component and
the support of its  L\'evy measure $\nu_{0,t_1,\ldots,t_n}$ is
contained in the set
\[\begin{split}
\{(x_0,x_1,\ldots, x_n)\,:\, & x_0\leq x_1\leq x_2\leq\ldots\leq
x_n
\textrm{ or } x_0\geq x_1\geq x_2\geq\ldots\geq x_n\\
&\textrm{ or } x_1=x_2=\ldots=x_n\ \textrm{ or for some }m=2,3,\ldots, n \\
& x_0 = x_1 = \ldots = x_{m-1},\, x_m = x_{m+1}=\ldots = x_n\},
\end{split}\]
where $x_0, x_1,x_2,\ldots, x_n$ are $d$-dimensional vectors and
$\leq$ and $\geq$ are coordinate-wise inequalities.

Then $\{X_t,t\geq0\}$ has block-associated increments.
\end{thm}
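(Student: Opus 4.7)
The plan is to reduce the statement to Theorem \ref{thid} applied to the increment vector, using Proposition \ref{rownowazny} to translate the geometric description of the support into the two-dimensional concentration condition on $\nu_{kl}$.

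Fix $n \in \bN$ and $0 = t_0 < t_1 < \ldots < t_n$, and consider the $nd$-dimensional increment vector
\[
\Delta = (X_{t_1}-X_0,\, X_{t_2}-X_{t_1},\, \ldots,\, X_{t_n}-X_{t_{n-1}}) = T (X_0, X_{t_1},\ldots,X_{t_n}),
\]
where $T$ is the block-linear map sending $(x_0,x_1,\ldots,x_n) \mapsto (x_1-x_0,\, x_2-x_1,\,\ldots,\,x_n-x_{n-1})$. The vector $\Delta$ is then infinitely divisible (as a linear image of an infinitely divisible vector), with vanishing Gaussian component (the hypothesis on the absence of a Gaussian component in $(X_0,X_{t_1},\ldots,X_{t_n})$ passes to $T$-images), and with L\'evy measure $\nu_\Delta = \nu_{0,t_1,\ldots,t_n}\circ T^{-1}$ restricted away from the origin. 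Thus condition~(i) of Theorem~\ref{thid} is trivially verified, and it remains to check condition~(ii).

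By Proposition~\ref{rownowazny}, condition~(ii) is equivalent to $\nu_\Delta$ being concentrated on
\[
S = (\R_+)^{nd} \cup (\R_-)^{nd} \cup \bigcup_{m=1}^{n} \Big(\{0\}^{(m-1)d} \times \R^{d} \times \{0\}^{(n-m)d}\Big).
\]
So the key step is to verify $T(\mathrm{supp}\,\nu_{0,t_1,\ldots,t_n}) \subset S$ by inspecting the four cases describing the support hypothesis. If $x_0 \leq x_1 \leq \ldots \leq x_n$ coordinate-wise, all increments are in $(\R_+)^d$, hence $T(x) \in (\R_+)^{nd}$; the reverse chain lands in $(\R_-)^{nd}$. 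The case $x_1 = x_2 = \ldots = x_n$ gives $T(x) = (x_1-x_0, 0, \ldots, 0)$, an element of the $m=1$ axis in the union. Finally, the case $x_0 = \ldots = x_{m-1}$ and $x_m = \ldots = x_n$ yields $T(x) = (0,\ldots,0, x_m - x_{m-1}, 0, \ldots, 0)$ with the only nonzero block in position $m$, landing on the $m$-th axis in the union.

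Since the support hypothesis ensures $\nu_{0,t_1,\ldots,t_n}$ sits inside the union of these four sets, and each is mapped by $T$ into $S$, Proposition~\ref{rownowazny} gives condition~(ii) of Theorem~\ref{thid} for the vector $\Delta$. Theorem~\ref{thid} then yields that $\Delta$ is associated between its natural $d$-sized blocks, which is exactly the defining property of block-associated increments. The only step requiring attention is the case analysis above; the rest is bookkeeping about how linear transformations act on L\'evy measures and triplets.
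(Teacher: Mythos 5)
Your proposal is correct and follows essentially the same route as the paper: pass to the increment map, identify the L\'evy measure of the increment vector as the pushforward of $\nu_{0,t_1,\ldots,t_n}$, and use Proposition \ref{rownowazny} together with the same four-case analysis of the support to verify the hypotheses of Theorem \ref{thid}. The only cosmetic difference is that you check the condition by pushing the support forward, while the paper pulls the set $S$ back, which is the same computation.
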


\begin{proof}
Let $U:\R^{(n+1)d}\rightarrow\R^{nd}$ be given by the formula
$$U(x)=U(x_0, x_1, \ldots, x_n)=(x_1-x_0,x_2-x_1,\ldots, x_n-x_{n-1}).$$
It is well-known that if $(X_0,X_{t_1},X_{t_2},\ldots,X_{t_n})$
has an infinitely divisible distribution with a L\'evy measure
$\nu_{0,t_1,\ldots,t_n}$ then the vector of increments\break
$(X_{t_1}-X_0, X_{t_2}-X_{t_1},\ldots,X_{t_n}-X_{t_{n-1}})$ has
also an infinitely divisible distribution with the L\'evy measure
$\nu_{0,t_1,\ldots,t_n}\circ U^{-1}$ (up to an atom at $0$, see
e.g. Sato \cite[Proposition 11.10]{Sato}). For the
block-association of increments it is enough that the L\'evy
measures $\nu_{0,t_1,\ldots,t_n}\circ U^{-1}$ concentrate on\break
$S =
(\R_+)^{nd}\cup(\R_-)^{nd}\cup\bigcup_{m=1}^{n}(\{0\}^{(m-1)d}\times\R^{d}\times\{0\}^{(n-m)d})$
(Proposition \ref{rownowazny}), so for $\nu_{0, t_1, \ldots, t_n}$
it is enough to concentrate on the union of sets
\begin{eqnarray*}
\lefteqn{\{x\,:\,x_1-x_0\geq0,x_2-x_1\geq0,\ldots,x_n-x_{n-1}\geq0\}}\qquad\\
&\cup& \{x\,:\,x_1-x_0\leq0,x_2-x_1\leq0,\ldots,x_n-x_{n-1}\leq0\}\\
&\cup& \{x\,:\, x_2-x_1 = 0, \ldots, x_n-x_{n-1} = 0\}\\
&\cup &\bigcup_{m=2}^n \{x\,:\,x_1-x_0=0, \ldots, x_{m-1}-x_{m-2}=0\}\\
&&\qquad\qquad  \cap\ \{x\, :\, x_{m+1}-x_{m}=0, \ldots, x_n -
x_{n-1}=0\}
\end{eqnarray*}
which equals to
\[\begin{split}
&\{x\,:\,x_0\leq x_1\leq x_2\leq\ldots\leq x_n
\textrm{ or } x_0\geq x_1\geq x_2\geq\ldots\geq x_n\\
&\quad\quad\textrm{ or }x_1=x_2=\ldots=x_n \textrm{ or for some $m=2,\ldots, n$}\\
&\quad\quad x_0=x_1=\ldots=x_{m-1}, x_{m}=x_{m+1}=\ldots=x_{n} \}.
\end{split}\]
\end{proof}

\begin{rem}\label{UwagaMaruyama}
It is clear that the finite dimensional properties of the L\'evy
measures
 $\nu_{t_0,t_1,\ldots,t_n}$ can be expressed in terms of their projective limit
$\nu$ (see \cite{Mar70}): $\nu$ must be concentrated on the union
of sets consisting of non-decreasing trajectories, non-increasing
trajectories and rather mysterious trajectories admitting only one
jump.
\end{rem}

\section{Some other notions of relaxed association}

The following notion was introduced by Burton et al. \cite{BDD}.
\begin{defn}\label{wa}
A sequence of $d$-dimensional random vectors $(X_1, X_2, \ldots,
X_m)$ is said to be {\em weakly associated} if whenever $\pi$ is a
permutation of $\{1,2,\ldots,m\}$, $1\leq k <m$   and $g : \R^{kd}
\to \R$, $h : \R^{(m-k)d} \to \R$ are coordinate-wise
non-decreasing, then
\[ \Cov\big(g(X_{\pi(1)},X_{\pi(2)}, \ldots, X_{\pi(k)}), h(X_{\pi(k+1)},X_{\pi(k+2)},\ldots, X_{\pi(m)}\big) \geq 0,\]
if the covariance exists. A family of random vectors is weakly
associated if its every finite subfamily is weakly associated.
\end{defn}
Burton et al. {\em ibid., Theorem 1},  provided an example of a
sequence
 of weakly associated random variables ($d=1$), which are not associated.
Let $Y_1, Y_2, \ldots $ be such a sequence. Fix $d > 1$ and define
a sequence of\break
$d$-dimensional random vectors  by
\[ X_k = (\underbrace{Y_k, Y_k, \ldots, Y_k}_{\text{$k$ times}}).\]
Then it is easy to see that $X_1, X_2, \ldots $ is weakly
associated but it is not associated between blocks built upon
coordinates. The following definition is in the spirit of Section
2.
\begin{defn}\label{wabb}
A family $X=\{X_i, i\in I\}$ is called \emph{weakly associated
between blocks} if for all non-decreasing functions
$f_k:\R^{|I_k|}\rightarrow\R$, $k=1,2,\ldots,n$, the random vector
$$(f_1(X(I_1)),f_2(X(I_2)),\ldots,f_n(X(I_n)))$$
consists of weakly associated random variables.
\end{defn}

The next definition can be found in Bulinski and Shashkin
\cite{BS}.
\begin{defn}
A family $\mathbf{X}=\{X_i, i\in I\}$ is called \emph{positively
associated}, if
$$\Cov \big(g(X(A_g)),h(X(A_h))\big)\geq0$$ for any disjoint sets
$A_g,A_h\subseteq I$ and all non-decreasing functions
$g:\R^{|A_g|}\rightarrow~\R$, $h:\R^{|A_h|}\rightarrow\R$.
\end{defn}

Clearly, for families of random variables ($d=1$) the notions of
weak association and positive association coincide. It is
interesting that due to this coincidence,
 the notions of weak association between blocks and
positive association between blocks are also the same. In fact, a
definition for the latter should look as follows.

\begin{defn}
A family $\mathbf{X}$ is called \emph{positively associated
between blocks}, if for all non-decreasing functions
$f_k:\R^{|I_k|}\rightarrow\R$, $k=1,2,\ldots,n$, the vector
$$(f_1(X({I_1})),f_2(X({I_2})),\ldots,f_n(X({I_n})))$$
is positively associated, i.e. for any disjoint finite sets
$A_g,A_h\subset\{1,2,\ldots,n\}$ and any non-decreasing functions
$g:\R^{|A_g|}\rightarrow\R$, $h:\R^{|A_h|}\rightarrow\R$
\begin{equation}\label{pabb}
\Cov \big(g(f_i(X({I_i})),i\in A_g), h(f_j(X({I_j})), j\in
A_h)\big)\geq 0,
\end{equation}
if the covariance exists.
\end{defn}
We see that both (\ref{pabb}) and Definition  \ref{wabb} state
that the random variables $f_1(X(I_1))$,
$f_2(X(I_2)),\ldots,f_n(X(I_n))$ are weakly associated, so there
is no need to define positive association between blocks.

\begin{rem}
It is easy to see that for jointly Gaussian random variables the
two types of relaxed association considered in the present paper
(association between blocks and weak association between blocks)
coincide and are equivalent to non-negativity of covariances of
random variables which {\em are not in the same block}.
\end{rem}

Next we shall give a formal statement of the original form and a
relaxed form of negative association due to Joag-Dev and Proschan
\cite{JDP}.

\begin{defn}
A family $\mathbf{X}=\{X_i, i\in I\}$ is called \emph{negatively
associated} if
$$\Cov \big(g(X(A_g)),h(X(A_h))\big)\leq0$$
 for any disjoint sets
$A_g,A_h\subseteq I$ and all non-decreasing functions
$g:\R^{|A_g|}\rightarrow~\R$, $h:\R^{|A_h|}\rightarrow\R$.
\end{defn}

\begin{defn}
A family $\mathbf{X}$ is called \emph{negatively associated
between blocks} if for all non-decreasing functions
$f_k:\R^{|I_k|}\rightarrow\R$, $k=1,2,\ldots,n$, the vector
$$(f_1(X_{I_1}),f_2(X_{I_2}),\ldots,f_n(X_{I_n}))$$
is negatively associated, i.e. for any disjoint finite sets
$A_g,A_h\subset\{1,2,\ldots,n\}$ and any non-decreasing functions
$g:\R^{|A_g|}\rightarrow\R$, $h:\R^{|A_h|}\rightarrow\R$
$$\Cov \big(g(f_i(X_{I_i}),i\in A_g), h(f_j(X_{I_j}), j\in A_h)\big)\leq0,$$
if the covariance exists.
\end{defn}

We conclude this section with definition of the corresponding
notions for increments of processes.

\begin{defn}
A $d$-dimensional stochastic process $\{X_t,t\geq0\}$ has
\emph{block-weakly-associated} (resp.
\emph{block-negatively-associated}) \emph{increments} if for every
$n\in\bN$ and any choice of $0<t_1<t_2<\ldots<t_n$ the increments
$$X_{t_1}-X_0,\quad  X_{t_2}- X_{t_1},\quad\ldots,\quad
 X_{t_n}- X_{t_{n-1}}$$ form the sequence of vectors which are weakly (resp. negatively) associated between
blocks formed by the $d$ components of each increment $X_{t_i} -
X_{t_{i-1}}$.
\end{defn}

\section{Limit theorems under weak association between blocks}

Let $X_1,X_2,\ldots$ be a sequence of $d$-dimensional random
vectors. After building blocks upon the coordinates of consecutive
vectors  we may compare the notions of weak association of random
vectors $\{X_k\}$ (Definition \ref{wa})
 and weak association between blocks (Definition \ref{wabb}). Formally the latter is weaker: in place of
non-decreasing  functions $g$ and $h$ ``directly" acting on
vectors:
\[g(X_{\pi(1)},X_{\pi(2)}, \ldots, X_{\pi(k)}),\quad  h(X_{\pi(k+1)},X_{\pi(k+2)},\ldots, X_{\pi(m)}), \]
the latter definition operates with factorizations
\[g\big(f_{\pi(1)}(X_{\pi(1)}),\ldots,f_{\pi(k)}(X_{\pi(k)})\big),\quad
h\big(f_{\pi(k+1)}(X_{\pi(k+1)}),\ldots,f_{\pi(m)}(X_{\pi(m)})\big).\]

As already mentioned in Introduction, we are not able to exhibit
any example of a sequence $\{X_k\}$, which is weakly associated
between blocks, but not weakly associated. On the other hand, the
computations performed in Section 2 and based on the covariance
interpolation formula suggest that it might be a serious advantage
to deal with factorized functions while checking whether the
sequence is weakly associated between blocks. This is one reason
for including the present section into the paper.

The other reason is that the complete generalization of Newman's
Central Limit Theorem \cite{Newman1} and Newman-Wright's
Invariance Principle \cite{NewmanWright} for sums of  stationary
associated random variables, originally proved by Burton,
Dabrowski and Dehling \cite{BDD} for weakly associated random
vectors, remains valid under weak association between blocks,
without any change in its proof. Here ``complete generalization"
means including as a particular case the Central Limit Theorem for
i.i.d. random vectors, with covariance matrices possibly
containing negative entries.

\begin{thm}\label{thm_graniczne_bloki}
Let $X_1,X_2,\ldots$ be a strictly stationary sequence of
$d$-dimensional random vectors, which are {\em weakly associated
between blocks}  and let $S_n=X_1+X_2 + \ldots+X_n$.

If $\bE X_1=0$, $\bE \|X_1\|^2<+\infty$ and
$\sum_{j=2}^{\infty}\bE X_1^kX_j^l<+\infty$ for all
$k,l=1,\ldots,d$ (where $X_j^k$ is the $k$-th component of the
vector $X_j$), then
$$\frac{S_n}{\sqrt{n}}\xrightarrow[n\rightarrow\infty]{\mathscr{D}}\mathcal{N}(0,\Sigma)$$
where $\Sigma=(\sigma_{kl})_{k,l=1\ldots,d}$ and $\sigma_{kl}=\bE
X_1^kX_1^l+2\sum_{j=2}^{\infty}\bE X_1^kX_j^l.$

Moreover, if
\[ Y_n(t)  = \frac{1}{\sqrt{n}} S_{[nt]},\ t \in \R^+,\]
(or $Y_n(t)$ is a polygonal interpolation between points $(k/n,
S_k/\sqrt{n})$), then
\[ Y_n \xrightarrow[n\rightarrow\infty]{\mathscr{D}} W_{\Sigma},\]
on the function space $C(\R^+:\R^d)$, where $W_{\Sigma}$ is a
Wiener process with covariance matrix $\Sigma$.
\end{thm}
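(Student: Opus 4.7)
The plan is to follow the Burton--Dabrowski--Dehling argument for weakly associated random vectors and verify that every invocation of weak association survives the relaxation to weak association between blocks. Their entire CLT/invariance-principle machinery rests on a single Newman-type covariance inequality for characteristic functions; once that inequality is re-established under block-wise weak association, the Bernstein blocks technique for the CLT and the maximal inequality for tightness both transfer mechanically.

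The key step is to establish, for every $t\in\R^d$, the bound
\[ \left|\bE\exp\bigl(i\langle t, S_n\rangle\bigr) - \prod_{j=1}^n \bE\exp\bigl(i\langle t, X_j\rangle\bigr)\right| \leq \sum_{1\leq j<k\leq n}\sum_{r,s=1}^d |t^r t^s|\,|\Cov(X_j^r, X_k^s)|. \]
To derive it, I would exploit the observation that for fixed $t$, the real function $x\mapsto \cos\langle t,x\rangle + \|t\|_1\sum_{r=1}^d x^r$ on $\R^d$ has partial derivative in $x^r$ equal to $-t^r\sin\langle t,x\rangle + \|t\|_1 \geq 0$, hence is coordinate-wise non-decreasing; the same holds with $\sin$ replacing $\cos$. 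Consequently each of $\cos\langle t,X_j\rangle$ and $\sin\langle t,X_j\rangle$ is a difference of two real, scalar, coordinate-wise non-decreasing functions of the block $X_j$. Expanding $\prod_j e^{i\langle t, X_j\rangle}$ into its real and imaginary parts yields a finite linear combination of terms of the form $\prod_j \phi_j(X_j)$ with each $\phi_j:\R^d\to\R$ non-decreasing. Weak association between blocks then furnishes the non-negativity of the pairwise covariances of the $\phi_j(X_j)$ required in Newman's standard telescoping estimate for associated real random variables, producing the inequality above.

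With this in hand, the CLT follows from a Bernstein blocks construction: partition $\{1,\ldots,n\}$ into alternating large blocks of length $p_n$ and gap blocks of length $q_n$ with $q_n=o(p_n)$ and $p_n=o(n)$. The Newman-type inequality applied across the gaps shows that the joint characteristic function of the large-block sums is close to the product of the marginal characteristic functions, with error controlled by $\sum_{|j-k|\geq q_n}|\Cov(X_j^r, X_k^s)|\to 0$. A Lindeberg step then delivers the $\mathcal{N}(0,\Sigma)$ limit, with $\Sigma$ identified by strict stationarity and the summability hypothesis. For the invariance principle, finite-dimensional convergence is obtained from the same CLT applied to disjoint increments, which remain weakly associated between blocks by construction. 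Tightness in $C(\R^+;\R^d)$ reduces to a componentwise maximal inequality for $\{S_k^r\}$; since $x\mapsto x^r$ is a non-decreasing function of each block, weak association between blocks implies classical weak association of the real sequence $\{X_j^r\}_{j\geq 1}$, so the Newman--Wright one-dimensional maximal inequality applies coordinate by coordinate. The main obstacle is precisely the derivation of the Newman-type inequality with the sharp $|t^r t^s|$ prefactor: the expansion of $\prod_j e^{i\langle t,X_j\rangle}$ into its monotone pieces must be bookkept carefully so as not to lose the factor $n^{-1}$ needed after the substitution $t\mapsto t/\sqrt n$; once this accounting is done, the remaining steps are routine adaptations of the BDD template.
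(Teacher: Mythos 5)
Your overall strategy --- rerun the Burton--Dabrowski--Dehling argument and check that every appeal to weak association survives the block-wise relaxation --- is exactly the strategy of the paper. The paper's entire proof, however, is a two-sentence observation: in BDD every invocation of weak association is through real random variables of the form $\langle a_j, X_j\rangle$ with $a_j^k\geq 0$, i.e.\ through a \emph{single} non-decreasing scalar functional per vector in each covariance evaluation, and weak association between blocks supplies precisely the weak association of these. You instead attempt to re-derive the key Newman-type characteristic-function inequality from scratch, and that derivation has a genuine gap.

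The gap is in the sentence ``Expanding $\prod_j e^{i\langle t,X_j\rangle}$ \dots\ yields a finite linear combination of terms of the form $\prod_j\phi_j(X_j)$ \dots\ Weak association between blocks then furnishes the non-negativity of the pairwise covariances \dots\ required in Newman's standard telescoping estimate.'' This fails twice over. First, your $\phi_j$ (e.g.\ $\cos\langle t,x\rangle+\|t\|_1\sum_r x^r$) are unbounded, so for three or more factors the products $\prod_j\phi_j(X_j)$ need not be integrable under a second-moment hypothesis, and Newman's telescoping bound --- which relies on $|\bE e^{i\cdots}|\leq 1$ for the factors already integrated out --- does not apply to them; the monotone decomposition has to be performed \emph{inside} each telescoped covariance $\Cov\bigl(\prod_{j<k}e^{i\langle t,X_j\rangle},\,e^{i\langle t,X_k\rangle}\bigr)$, not before telescoping. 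Second, and more seriously, once you telescope first, the monotone majorant of $\cos\bigl(\sum_{j<k}\langle t,x_j\rangle\bigr)$, namely $\cos(\cdots)+\sum_{j<k}\sum_r|t^r|x_j^r$, is a coordinate-wise non-decreasing function of the $k-1$ blocks \emph{jointly} which, for $t$ with components of both signs, does not factor as $F(f_1(x_1),\ldots,f_{k-1}(x_{k-1}))$ with one scalar non-decreasing $f_j$ per block: it sees each block through the two functionals $\langle t^+,\cdot\rangle$ and $\langle t^-,\cdot\rangle$. Weak association between blocks controls only covariances of functions factorized through a single scalar $f_j$ per block, so the positivity you invoke at exactly this point is the one thing the hypothesis does not (directly) give. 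This is the whole difficulty of the theorem; your proposal asserts it rather than proves it. To close the gap you must either do what the paper does --- verify line by line in BDD that each covariance evaluation genuinely uses only one non-negative linear functional $\langle a_j,\cdot\rangle$ per vector --- or supply an independent derivation of the inequality compatible with the factorized form. (Your remaining steps --- Bernstein blocking, identification of $\Sigma$, and the coordinate-wise Newman--Wright maximal inequality via $f_j(x)=x^r$ --- are fine.)
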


\begin{proof}
In their proof, Burton, Dabrowski and Dehling \cite{BDD} use the
weak association of the following random variables:
$$f_j(X_j)=\langle a_j,X_j\rangle=\sum_{k=1}^d a_j^k X_j^k,$$
 where $a_j^1, a_j^2, \ldots, a_j^d \geq 0$ are suitably chosen (for tightness purposes, convergence of
finite dimensional distributions etc.). Our assumption on weak
association between blocks provides exactly the same information.
\end{proof}

\begin{rem}
It is likely that also other existing limit theorems for
associated random variables (see e.g.  \cite[Chapter 3]{BS}) can
be proved under relaxed assumptions like weak association between
blocks and in a similar way  as Theorem \ref{thm_graniczne_bloki}.
In particular, there is a work in progress towards results on
convergence to stable laws with infinite variance, paralleling
\cite{DJ}.
\end{rem}

\begin{center}
\textbf{Acknowledgement}
\end{center}

At some stage of preparation of the paper we had fruitful
discussions with Tomasz Schreiber. He passed away on December 1st,
2010.  We dedicate this paper to His memory.

\mbox{} \hfill {\small \begin{tabular}{l}
\textsc{Nicolaus Copernicus University}\\
\textsc{Faculty of Mathematics and Computer Science}\\
\textsc{ul. Chopina 12/18, 87-100 Toruń, Poland}\\
\textsc{e-mail:} adjakubo@mat.umk.pl, joanka@mat.umk.pl
\end{tabular}}


\begin{thebibliography}{99}
\footnotesize

\bibitem {BP}
{\sc Barlow, R.E.  and  Proschan, F.} (1996). {\em Mathematical
theory of reliability}. SIAM, Philadelphia.

\bibitem{BS}
{\sc Bulinski, A. and Shashkin, A.} (2007). {\em Limit Theorems
for Associated Random Fields and Related Systems}. World
Scientific, Singapore.

\bibitem{BDD}
{\sc Burton, R.M., Dabrowski, A.R. and Dehling, H.} (1986). An
invariance principle for weakly associated random vectors. {\em
Stochastic Process. Appl.} {\bf 23,} 301--306.

\bibitem{DJ}
{\sc Dabrowski, A.R. and Jakubowski, A.} (1994). Stable limits for
associated random variables. {\em Ann. Probab.} {\bf 22} 1--16.


\bibitem{Esary}
{\sc Esary, J.D., Proschan, F. and Walkup, D.W.} (1967)
Association of random variables, with applications. {\em Ann.
Math. Statist.} {\bf 38,} 1466--1474.

\bibitem{Glasserman}
{\sc Glasserman, P.} (1992). Processes with associated increments.
{\em J. Appl. Probab.} {\bf 29,} 313--333.

\bibitem{HPS}
{\sc Houdr\'{e}, Ch., P\'{e}rez-Abreu, V. and Surgailis, D.}
(1998). Interpolation, correlation identities, and inequalities
for infinitely divisible variables. {\em J. Fourier Anal. Appl.}
{\bf 4} 651--668.

\bibitem{JDP}
{\sc Joag-Dev, K. and Proschan, F.} (1983). Negative association
of random variables, with applications. {\em Ann. Statist.} {\bf
11,} 286--295.

\bibitem{KPS}
{\sc Kar\l owska-Pik, J. and Schreiber, T.} (2008). Association
criteria for M-infinitely-divisible and U-infinitely-divisible
random sets. {\em Probab. Math. Statist.} {\bf 28,} 169--178.

\bibitem{Liggett}
{\sc Liggett, T.M.} (1985). {\em Interacting particle systems}.
Springer, Berlin-Heildelberg.

\bibitem{MO}
{\sc Marshall, A.W. and Olkin, I.} (1979). {\em Inequalities:
Theory of Majorization and its Applications}. Academic Press, New
York.

\bibitem{Mar70}
{\sc Maruyama, G.} (1970). Infinitely divisible processes. {\em
Teor. Verojatnost. i Primenen.} {\bf 15,} 3--23. (Eng. transl.
{\em Theor. Probability Appl.} {\bf 15,} 1--22.)

\bibitem{MuellerStoyan}
{\sc M\"uller, A. and Stoyan, D.} (2002). {\em Comparison Methods
for Stochastic Models and Risks}. Wiley, New York.

\bibitem{Newman1}
{\sc Newman, C.M.} (1980). Normal fluctuations and the FKG
inequalities. {\em Comm. Math. Phys.} {\bf 74,} 119--128.

\bibitem{Newman2}
{\sc Newman, C.M.} (1983). A general central limit theorem for FKG
systems. {\em Comm. Math. Phys.} {\bf 91,} 75--80.

\bibitem{NewmanWright}
{\sc Newman C.M., and Wright, A.L.} (1981). An invariance
principle for certain dependent sequences. {\em Ann. Probab.} {\bf
9,} 671--675.

\bibitem{Pitt}
{\sc Pitt, L.D.} (1982). Positively correlated normal variables
are associated. {\em Ann. Probab.} {\bf 10,} 496--499.

\bibitem{RR89}
{\sc Rajput, B.S. and Rosi\'nski, J.} (1989). Spectral
representations of infinitely divisible processes. {\em Probab.
Theory Related  Fields.} {\bf 82,} 451--487.

\bibitem{Resnick}
{\sc Resnick, S.I.} (1988) Association and multivariate extreme
value distributions. [in:] {\sc Heyde, C.C. }(ed.): {\em Gani
Festshrift: Studies in Statistical Modelling and Statistical
Science}, pp. 261--271. Statist. Society of Australia.

\bibitem{Samorod}
{\sc Samorodnitsky, G.} (1995). Association of infintely divisible
random vectors. {\em Stochastic Process. Appl.} {\bf 55,} 45--55.

\bibitem{Sato}
{\sc Sato, K.} (1999) {\em L\'evy Processes and Infinitely
Divisible Distributions}. Cambridge University Press, Cambridge.


\end{thebibliography}
\end{document}